\documentclass[11pt]{article}
\usepackage{amssymb}
\usepackage{}
\usepackage{pifont}
\usepackage{amscd}
\usepackage{amsfonts}
\usepackage{amsmath, amsthm}
\usepackage{amssymb, amsxtra}
\usepackage[UTF8]{ctex}
\usepackage{graphicx}
\usepackage{tikz-cd}

\usepackage{multicol}
\usepackage{epstopdf}
\usepackage{epsf}
\usepackage{float}
\usepackage{extarrows}
\usepackage[a4paper, text={.8\paperwidth,.83\paperheight}, ratio=1:1]{geometry}
\usepackage[format=hang,font=small,textfont=it]{caption}
\usepackage{authblk}

\usepackage[colorlinks=true, linkcolor=blue, citecolor=blue]{hyperref}
\usepackage[english]{babel}
\usepackage{latexsym}

\usepackage{tikz}
\usetikzlibrary{calc}
\usepackage[tight]{subfigure}

\usepackage{color}
\definecolor{greenbean}{RGB}{199,237,204}

\newtheorem{thm}{Theorem}[]

\newtheorem{lemma}[thm]{Lemma}

\newtheorem{cor}[thm]{Corollary}
\newtheorem{eg}{Example}[]
\newtheorem{Qst}{Question}[]

\newtheorem{Def}[thm]{Definition}

\newtheorem{rmk}[thm]{Remark}

\def \ta{\tau}

\def \ta1{\tau_1}

\def \prodl{\prod\limits}

\begin{document}
\title{
 Topological and arithmetic characteristics about products of projective lines with complex tori
\footnotetext{\hspace{-1.8em}
E-mail address: J.-L. Mo: mojiali0722@126.com; M. Amram: meiravt@sce.ac.il; C. Gong: cgong@suda.edu.cn}}
\author[1]{Jia-Li Mo}
\author[2]{Meirav Amram}
\author[3]{Cheng Gong}

\affil[1]{\small{ College of Information and Network Engineering
 Anhui Science and Technology University, Fengyang, Anhui 233100, China}}
\affil[2]{\small{Department of Mathematics, Shamoon College of Engineering, Ashdod, Israel}}
\affil[3]{\small{Department of Mathematics, Soochow University, Suzhou, Jiangsu, China}}

\date{}

\maketitle

\abstract{In this paper,  we study non-planar degeneracies with cylindrical configurations. They could be constructed by the product $\mathbb{CP}^1 \times T$ of the projective plane and a complex torus with embedding $(m,n)$. We prove that their fundamental groups of Galois covers have an abelian subgroup of rank $m(2n-1)$ respectively, and the irregularity of these surfaces are at least $2mn-1$. Furthermore, we also use Chern numbers to compute the index of such surfaces and classify them.\\

{\bf Keywords} Chern number, complex torus, fundamental groups, Galois cover. \\

{\bf MR(2020) Subject Classification} 05E16, 14J10, 14J80,  20F55.

\section{Scientific background}\label{Introduction}

There has been extensive research about the topological and arithmetic properties of non-simply connected surfaces. In this paper, we investigate some of these properties of the surfaces.

An effective approach to studying the topological properties of algebraic surfaces is the application of degeneration techniques, particularly degeneration of the surface into a union of planes. The earliest application of this method was shown in the study of Hirzebruch surfaces in \cite{MRT}~and \cite{MoTe87}. As demonstrated in  ~\cite{AGRSV},~\cite{degree6},~ and~\cite{degree5}, this method has been successfully applied to surfaces of degrees~5,~6, and~8, revealing their topological properties.  Moreover, in references \cite{AGM1},~\cite{AGM2},~and \cite{AGM3}, we further investigated the topological properties of the Galois cover of Zappatic surfaces. Our results are related to Chisini's conjecture about algebraic surfaces and can be applied to the classification of algebraic surfaces. For details, please refer to \cite{Ku99} and \cite{Ku08}.

A promising direction for further work is the exploration of non-planar degenerations. Indeed, we have initiated preliminary investigations into such degenerations in~\cite{A}. This paper focuses on cylindrical non-planar degenerations, illustrated in Figure~\ref{cylinder}.

\begin{figure}[H]
\begin{center}
\scalebox{0.65}{\includegraphics{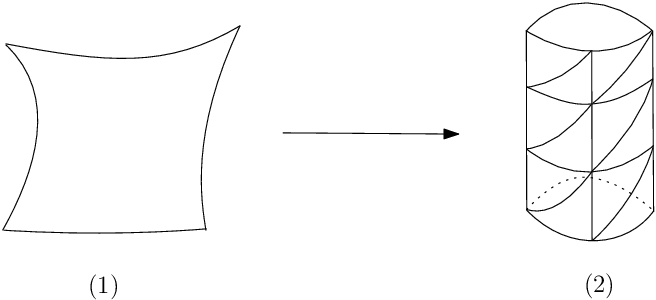}}
\end{center}
\setlength{\abovecaptionskip}{-0.15cm}
\caption{Cylindrical non-planar degeneration}\label{cylinder}
\end{figure}

Specifically, we degenerate the smooth surface (Figure \ref{cylinder}-(1)) into a cylindrical configuration: a union of multiple planes. In Figure \ref{cylinder}-(2), each plane is represented by a triangle.
The unfolding of the degenerate surface is shown in Figure \ref{small}.

\begin{figure}[H]
\begin{center}
\scalebox{0.95}{\includegraphics{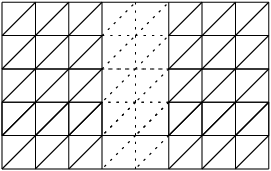}}
\end{center}
\setlength{\abovecaptionskip}{-0.15cm}
\caption{The degeneration of the  surface}\label{small}
\end{figure}

We use the symbol $X^{m,n}$ to represent a smooth surface that has degenerated to have a length of $n$ and a width of $m$ (i.e., comprising an $m\times n$ array of squares). For detailed information, please refer to Section~\ref{App}.

The Galois cover $X_{gal}$ of a surface $X$ serves as a geometric manifestation of Galois theory and reflects the topological properties of the surface. In \cite{Li}, the author elucidates the arithmetic properties of Galois covers and provides computational formulas for certain topological invariants. Furthermore, Galois covers can be applied to the study of moduli spaces of surfaces; for instance, in \cite{Ro}, the authors compute decompositions of these moduli spaces. Here, we aim to study the structure of fundamental group $\pi_1(X_{gal})$ of the Galois cover of a surface.

In this paper, as a first step towards computing $\pi_1(X^{m, n}_{gal})$ for surfaces $X^{m,n}$, here $m \geq 1, \ n \geq 2$, we provide a complete presentation of these fundamental groups for families $X^{2,4}$, $X^{2,n}$, and $X^{3,n}$. By applying techniques from \cite{RTV}, it follows directly that $\pi_1(X^{m, n}_{gal})$ is a subgroup of a certain quotient of Coxeter groups.
Furthermore, we employ Coxeter group theory to derive certain properties of $\pi_1(X^{m, n}_{gal})$, with a particular focus on its subgroups and quotient structures.
By analyzing the quotient group of $\pi_1(X^{m, n}_{gal})$, we are able to estimate the irregularity of the corresponding Galois cover in this setting.
The following theorem constitutes our main result.

\begin{thm}\label{main theorem} 
Group $\pi_1(X^{m, n}_{gal})$ contains a subgroup that is isomorphic to $\mathbb{Z}^{m(2n-1)}$.
\end{thm}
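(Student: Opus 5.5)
We will use the standard Moishezon--Teicher machinery, following \cite{RTV}. Let $f:X^{m,n}\to\mathbb{CP}^2$ be a generic projection with branch curve $S$ of degree $d=2mn$, the number of triangles in the cylindrical degeneration. Then $\pi_1(X^{m,n}_{gal})$ is the kernel of the natural map
\[
\widetilde{G}=\pi_1(\mathbb{CP}^2-S)/\langle \Gamma_j^2\rangle \longrightarrow S_{d}.
\]
This map sends each generator $\Gamma_j$ to the transposition exchanging the two triangles that meet along the $j$-th line of the degeneration. Since $\pi_1(X^{m,n}_{gal})$ is a subgroup of this quotient by \cite{RTV}, it suffices to exhibit $\mathbb{Z}^{m(2n-1)}$ inside the kernel of $\widetilde G\to S_d$.

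\textbf{Step 1: a Coxeter-type presentation.} We first build the dual graph $T$ of the degeneration: one vertex per triangle and one edge per interior line. For the cylindrical $m\times n$ configuration, each square contributes a diagonal edge. Squares adjacent along the length are joined by the vertical lines, with cyclic identification around the cylinder, and squares adjacent along the width are joined by horizontal lines. From the local braid monodromies at the singular points of the degeneration (2-, 3-, and 6-points), we extract the relations of $\widetilde G$. These are the Coxeter relations for edges of $T$ (commuting if disjoint, braid relation if adjacent), together with triangle relations for cycles. The result is that $\widetilde G$ is a quotient of the generalized Coxeter group $C_Y(T)$ of \cite{RTV}, by relations coming from the cycles of $T$.

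\textbf{Step 2: identify the kernel structure.} We use the result of \cite{RTV}: for a connected graph $T$ on $d$ vertices, $C_Y(T)$ modulo the triangle/cycle relations is a semidirect product $A_{t,d}\rtimes S_d$. Here $A_{t,d}$ is generated by $d-1$ copies of a group of rank $t$, and $t$ measures the independent cycles that survive. The first Betti number of the dual graph of the cylinder is the number of edges minus vertices plus one. A direct count over the $m\times n$ cylindrical array, with $2mn$ vertices, gives the relevant invariant. Our plan is to show that the subgroup surviving after all local relations from the degeneration contains a free abelian part isomorphic to $\mathbb{Z}^{m(2n-1)}$. Concretely, we will pick explicit commutator-type elements $u_{k}=[\Gamma_i,\Gamma_j^{\Gamma_k}]$ or products along the nontrivial loops (those going around the cylinder). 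We then check that they lie in the kernel, pairwise commute, and have images independent in the abelianization of the kernel.

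\textbf{Step 3: independence, the main obstacle.} Commutation and membership in the kernel follow from the presentation. The hard part is showing that the $m(2n-1)$ chosen elements generate a \emph{free} abelian group of full rank, i.e.\ that no hidden relation kills them. We plan to do this by constructing a homomorphism from $\pi_1(X^{m,n}_{gal})$, or from the kernel, onto $\mathbb{Z}^{m(2n-1)}$. The natural candidate is the homomorphism induced by the affine Coxeter quotient $\widetilde A_{d-1}$-type structure: the loop generators map to translations in the lattice of an affine Weyl group. Verifying that every relation from the degeneration holds in the target, so that the map is well defined, is the calculation we expect to be the bottleneck. Once this map exists, its restriction to the span of the $u_k$ is injective, which yields the desired subgroup.
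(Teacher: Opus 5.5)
There is a genuine gap. The proposal never fixes the elements, never explains where $m(2n-1)$ comes from, and applies the Coxeter machinery in the wrong group.

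\textbf{The wrong group.} Your $\widetilde G$ is not a quotient of $C_Y(T)$. It has two generators $\Gamma_j,\Gamma_{j'}$ per line, and the relations do not identify them. For instance, for $m=1$ the group $G^{1,n}_1\cong C(T)$ is the affine group $\widetilde A_{2n-1}$, whose kernel to $S_{2n}$ is $\mathbb{Z}^{2n-1}$, whereas $\pi_1(X^{1,n}_{gal})\cong\mathbb{Z}^{4n-2}$. Note also that this degeneration has only 2-points and 6-points, no 3-points. The paper first passes to $G^{m,n}_1=G^{m,n}/\langle j^{-1}j'\rangle\cong C_Y(T)/\langle E_6\text{ basic cycle relations}\rangle$. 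By Lemma~\ref{ii}, the $E_6$ relations are the cycle relations of the $n(m-1)$ hexagons around the six-points.

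\textbf{The elements and the count.} For each of the $m$ cycles $H^{m,n}_i$ of $T$, running once around the cylinder through the $2n$ triangles of row $i$, the paper takes the $2n-1$ elements $\gamma_j\gamma_1^{-1}$, $j=2,\dots,2n$, which lie in $K_1(T)$. A Betti-number count will not give you $m(2n-1)$. Indeed $b_1(T)=(3mn-n)-2mn+1=n(m-1)+1$, so once the hexagons are flattened a single cycle class survives; all $H^{m,n}_i$ are homologous modulo hexagons. The rank comes instead from having $m$ vertex-disjoint representatives of that one class. Your commutator candidates $[\Gamma_i,\Gamma_j^{\Gamma_k}]$ will not do either. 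Since $T$ is triangle-free, their images in $G^{m,n}_1$ are either trivial by the commutation and fork relations, or they do not lie over $1\in S_{2mn}$. It is your ``products along loops'' that must be made precise, as above.

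\textbf{Independence.} Your Step~3 is a sound alternative to the paper's torsion argument via $K(T)\cong A_{t,n}$, and it is not the bottleneck. Send the edge $(a,b)$ of $T$ to the affine reflection of $\mathbb{R}^{2mn}$ in the hyperplane $x_a-x_b=k_{ab}$. Here $k_{ab}$ is the signed number of crossings of that dual edge with a fixed arc joining the two boundary circles of the cylinder. The relations then check out as follows.
\begin{itemize}
\item Disjoint edges give orthogonal roots and adjacent edges give roots at angle $2\pi/3$, so the commutation and braid relations hold.
\item For a fork, $vwv$ is a reflection whose root is orthogonal to that of $u$.
\item For an $r$-cycle, one computes that $\gamma_{r-1}\gamma_r^{-1}$ is translation by $h$ times a root, where $h=\sum k$ is the holonomy around the cycle.
\end{itemize}
Hence the $E_6$ relations hold (since $h=0$ on hexagons) and the map is defined on $G^{m,n}_1$. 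On each $H^{m,n}_i$ we have $h=\pm1$, so the elements $\gamma_j\gamma_1^{-1}$ go to $2n-1$ independent root-lattice translations supported on row $i$. Composing with $\theta:\pi_1(X^{m,n}_{gal})\to K_1(T)$ gives the independence you want.

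\textbf{Commutativity, the real gap.} What such a map cannot give is commutativity. A homomorphism to $\mathbb{Z}^k$ proves independence, not that the lifts commute (think of $F_2\to\mathbb{Z}^2$). Nor does commutation in the quotient $G^{m,n}_1$ pull back through $\theta$. So your sentence ``commutation \dots\ follow[s] from the presentation'' is the real unproven step. The paper's verification is the word reduction of $\gamma_{2n-1}\gamma_1^{-1}\gamma_{2n}=\gamma_{2n}\gamma_1^{-1}\gamma_{2n-1}$ to $\langle 2n-1,2n\rangle=e$. That reduction is done in $G^{m,n}_1$, using $j=j'$ and $j^2=e$, and then transported by the surjectivity of $\theta$. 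Surjectivity alone does not make lifts commute. Whichever route you take, you must show the chosen lifts commute in $\pi_1(X^{m,n}_{gal})$ itself, where $\Gamma_j\neq\Gamma_{j'}$.
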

Since a simply connected space has trivial fundamental group, Theorem~\ref{main theorem} immediately implies that Galois covers $X^{m, n}_{gal}$ cannot be simply connected. Therefore, we obtain the following corollary.

\begin{cor}
The Galois covers of the surfaces $X^{m, n}$ are non-simply connected.
\end{cor}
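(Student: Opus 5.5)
The statement to prove is the Corollary: $X^{m,n}_{gal}$ is not simply connected. The plan is to deduce it directly from Theorem~\ref{main theorem}, which is stated earlier and may be assumed.

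The argument has essentially one step.
\begin{enumerate}
\item By Theorem~\ref{main theorem}, for every admissible pair $m\geq 1$, $n\geq 2$, the group $\pi_1(X^{m,n}_{gal})$ contains a subgroup $A\cong\mathbb{Z}^{m(2n-1)}$.
\item Since $m\geq 1$ and $n\geq 2$, we have $m(2n-1)\geq 3>0$. Hence $A$ is a nontrivial free abelian group; for instance, any standard basis vector has infinite order.
\item A group containing a nontrivial subgroup is itself nontrivial, so $\pi_1(X^{m,n}_{gal})\neq 1$.
\end{enumerate}

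To conclude, I need $X^{m,n}_{gal}$ to be path-connected, so that the fundamental group is independent of the base point. This is the standard fact that the Galois cover of a surface is irreducible, hence connected, which is implicit in how $\pi_1(X^{m,n}_{gal})$ is defined in the paper. A path-connected space with nontrivial fundamental group is by definition not simply connected, and the corollary follows.

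There is no real obstacle here. The only point to check is that the rank $m(2n-1)$ is positive in the paper's range of parameters, which step 2 confirms. All the substance lies in Theorem~\ref{main theorem}: there one would use the techniques of \cite{RTV} to realise $\pi_1(X^{m,n}_{gal})$ as the kernel of a map from a quotient of a Coxeter group to $S_N$, and then exhibit $m(2n-1)$ commuting, independent elements coming from the cylindrical (torus) directions of the degeneration.
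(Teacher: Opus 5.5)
Your proposal is correct and matches the paper's argument: the paper also reads the corollary off Theorem~\ref{main theorem}, since a fundamental group containing $\mathbb{Z}^{m(2n-1)}$ with $m(2n-1)\geq 3$ cannot be trivial. Your path-connectedness check is harmless but unnecessary, because a space whose fundamental group is nontrivial at some base point already fails to be simply connected.
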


Building on the findings presented in Theorem \ref{main theorem}, we can derive significant implications for the structure of the first homology group and the irregularity of the algebraic surface. This subgroup structure of the fundamental group indicates a rich topological complexity, facilitating a deeper understanding of the associated  properties of algebraic surface.

\begin{cor}
\begin{enumerate}
\item 
 The rank of  the first homology group $H_1(X^{m, n}_{gal},\mathbb{Z})$ is at least $2(2mn-1)$.
\item The irregularity of the algebraic surface $X^{m, n}_{gal}$ is at least $2mn-1$.
\end{enumerate}
\end{cor}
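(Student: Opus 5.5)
The plan is to prove part~1 directly and then deduce part~2 from Hodge theory. For part~2 I use the fact that $X^{m,n}_{gal}$ is a smooth compact complex algebraic surface, hence Kähler. For such a surface $b_1 = 2q$, where $q$ is the irregularity. So any lower bound $b_1 \ge 2(2mn-1)$ immediately gives $q \ge 2mn-1$. All the work is therefore in part~1.

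Theorem~\ref{main theorem} alone does not suffice for part~1. An abelian subgroup of rank $m(2n-1)$ bounds neither the rank of the abelianization nor the right number, since $2(2mn-1) > m(2n-1)$. Instead I will produce a surjection from $\pi_1(X^{m,n}_{gal})$ onto a free abelian group of rank $2(2mn-1)$. Because $H_1 = \pi_1^{ab}$, such a surjection gives $b_1 \ge 2(2mn-1)$.

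The count is suggestive. The degeneration of $X^{m,n}$ consists of $N = 2mn$ planes, two triangles per square, so $2(2mn-1) = 2(N-1)$. This is exactly the rank of the lattice
\[
A=\Big\{(v_1,\dots,v_N)\in (\mathbb{Z}^2)^N : \textstyle\sum_i v_i=0\Big\}.
\]
Such a lattice appears in the affine-Coxeter quotients $\mathbb{Z}^{2(N-1)}\rtimes S_N$ of \cite{RTV}. The two lattice directions come from the two periods of the torus factor, which is what makes the configuration cylindrical and closes the strip.

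The concrete steps are as follows. First, recall from the application of \cite{RTV} that $\pi_1(X^{m,n}_{gal})$ is the kernel of the natural map $\widetilde{G}\to S_N$. Here $\widetilde{G}$ is the quotient of the Coxeter group $C_Y(T)$ attached to the degeneration by the extra relations coming from the branch curve, and the generators map to transpositions. Second, build an explicit homomorphism $\phi:\widetilde{G}\to A\rtimes S_N$. Each generator attached to an edge of the unfolded diagram (Figure~\ref{small}) is sent to a transposition; the generators attached to the boundary edges that are glued along the torus directions are sent to a transposition twisted by a translation in the appropriate $\mathbb{Z}^2$ coordinate. Third, check that every defining relation of $\widetilde{G}$ holds in the target: the Coxeter relations, the triple-point relations at the vertices, and the extra commutator or cyclic relations from the gluing. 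Fourth, show that the image of $\phi$ contains enough of $A$. Then $\phi$ restricted to $\ker(\widetilde{G}\to S_N)=\pi_1(X^{m,n}_{gal})$ lands in $A$ with finite-index image, hence of rank $2(N-1)$, and this gives the surjection onto a free abelian group of that rank.

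I expect the main obstacle to be the combination of the relation check and the surjectivity. The relations at the glued boundary, where the cylinder and torus closing occurs, must be compatible with nonzero translations; otherwise the image could collapse to a lower-rank lattice. To control this I would first handle the small cases $X^{2,n}$ and $X^{3,n}$, whose presentations are computed explicitly, and read off the translation assignments that work there. I would then generalize these assignments by induction on $m$, adding one row of $n$ squares at a time. Each new row adds $2n$ planes and $4n$ new lattice directions, which is consistent with the formula $2(2mn-1)$.
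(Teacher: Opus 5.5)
Your reduction of part~2 to part~1 through $b_1=2q$ matches the paper's last step. You are also right that Theorem~\ref{main theorem} cannot bound $b_1$. The gap is in part~1: Step~4 cannot succeed for the group you chose.

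\textbf{Why your homomorphism cannot reach rank $2(2mn-1)$.} Your $\widetilde G$ is a quotient of $C_Y(T)$, so it has one generator per edge of the dual graph. In other words it is (a quotient of) $G^{m,n}_1$, in which $j=j'$. Take any homomorphism $\phi:\widetilde G\to A\rtimes S_N$ over $S_N$.
\begin{itemize}
\item The relation $u^2=1$ forces $\phi(u)=(w_u\otimes(e_\alpha-e_\beta),(\alpha\,\beta))$ with $w_u\in\mathbb{Z}^2$. Up to conjugation by translations, $\phi$ is therefore a $\mathbb{Z}^2$-valued $1$-cochain on $T$ modulo coboundaries.
\item Consequently the translation parts of $\phi(\ker)$ lie in $\{v\in\mathbb{Z}^N:\sum v_i=0\}\otimes P$, where $P\subset\mathbb{Q}^2$ is spanned by the holonomies of $w$ around cycles of $T$.
\item Evaluate $\phi$ on an $E_6$ relation $c\,a\,b\,a\,c=d\,f\,e\,f\,d$, after gauging $w$ to vanish on $a,b,d,f,e$. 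The two sides differ by the translation $h\otimes(e_\gamma-e_\delta)$ with $\gamma\neq\delta$, where $h$ is the hexagon holonomy. Hence $h=0$.
\item Filling in the $n(m-1)$ hexagons of $T$ gives a cylinder, so $H_1(T)/\langle\text{hexagons}\rangle\cong\mathbb{Z}$. Thus $\mathrm{rank}\,P\le 1$, and $\phi(\ker)$ has rank at most $N-1=2mn-1$, half of what you need.
\end{itemize}
For $m=1$ this is immediate. There $T$ is a $2n$-cycle and $C(T)$ is the affine Weyl group $\tilde A_{2n-1}\cong\mathbb{Z}^{2n-1}\rtimes S_{2n}$, yet $\pi_1(X^{1,n}_{gal})\cong\mathbb{Z}^{4n-2}$.

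The underlying reason is that the configuration is glued in only one direction. The second period of $T$ is a vanishing cycle of the degeneration of $T$ to a cycle of lines. In the branch-curve group it is visible only through the difference between $j$ and $j'$, which is exactly what passing to $C_Y(T)$ discards.

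Dropping the $E_6$ relations does not rescue the plan. $C_Y(T)$ surjects onto $G^{m,n}_1$, not conversely, so $\ker(C_Y(T)\to S_N)$ is not a quotient of $\pi_1(X^{m,n}_{gal})$. Also, even for a legitimate quotient $\widetilde G$ of $G^{m,n}$, $\pi_1(X^{m,n}_{gal})$ only surjects onto $\ker(\widetilde G\to S_N)$; it is not equal to it.

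\textbf{What the paper does, and how to repair your idea.} The paper needs none of the degeneration data here. It applies Liedtke's Corollary~\ref{L} with $l=2mn$ and $b_1(\mathbb{CP}^1\times T)=2$, so $H_1(X^{m,n}_{gal},\mathbb{Z})/\overline{C_{proj}}$ already has rank $2(2mn-1)$.

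Your lattice $A$ is exactly Liedtke's $\mathcal{K}(\pi_1(X),N)$ for $\pi_1(X)=\mathbb{Z}^2$. You can salvage the idea by producing the map geometrically rather than from the Coxeter presentation:
\begin{itemize}
\item $X_{gal}\subset X^N$ gives an $S_N$-equivariant map $H_1(X_{gal})\to H_1(X)^N=(\mathbb{Z}^2)^N$.
\item Its image lies in $A$: the sum of the $T$-coordinates is $S_N$-invariant, so it factors through $X_{gal}/S_N=\mathbb{CP}^2$, which has $H_1=0$.
\item Its first coordinate is rationally surjective, because $X_{gal}\to X$ is finite and surjective.
\item The $S_N$-stable subspaces of $A\otimes\mathbb{Q}\cong V_{\mathrm{std}}\otimes\mathbb{Q}^2$ have the form $V_{\mathrm{std}}\otimes W$, whose first-coordinate projection is $W$. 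Rational surjectivity of the first coordinate forces $W=\mathbb{Q}^2$.
\end{itemize}
Hence the image has finite index in $A$, which gives $b_1(X^{m,n}_{gal})\ge 2(2mn-1)$.
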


In algebraic geometry, when classifying a compact complex surface $X$, in addition to its own topological invariants (such as Betti numbers), the Chern numbers of its tangent bundle, $c^2_1(X)$ and $c_2(X)$, are two of the most fundamental numerical invariants. The positivity and negativity of Chern numbers is closely related to the Kodaira dimension of the surface.
In \cite{BHPV}, the authors discuss the geography problem of surfaces with given Chern numbers. The method of Galois covers can also be employed to provide many examples in the geography of surfaces.
We apply the formulas from \cite{MoTe87} to determine the Chern numbers of the Galois covers.

Another important topological invariant of algebraic surfaces $X$ is the index
of the intersection form on $H^2(X,\mathbb{Q})$, denoted by  $\tau(X)$. Hirzebruch's formula (see \cite{H}) says that
$$\tau (X)=\frac{1}{3}\left( c_1^2(X)-2c_2(X) \right).$$

By analyzing the index, we can draw connections between the topological characteristics of these surfaces and their geometrical  properties. We have the following theorem:

\begin{thm}\label{s}
\begin{enumerate}
   \item $\tau(X^{m,n}_{gal})=\frac{1}{3}(4m)!\cdot(8m-9)$, if $n=2$.  

   In particular, when $m=1$,  $X^{m,n}_{gal}$ are surfaces with a negative index.  
   
   When $m\geq 2$,  $X^{m,n}_{gal}$ are   surfaces with a positive index.  
   
   \item $\tau(X^{m,n}_{gal})=\frac{1}{3}(2mn)!\cdot (mn-3n+3)$, if $n\geq3$.

   In particular, when $m=1$, $X^{m,n}_{gal}$ are surfaces with a negative index. 

   When $m=2, n=3$,  $X^{m,n}_{gal}$ is an index 0 surface. Moreover, when $n>3$, such  surfaces have a negative index.

   When $m\geq 3$,  $X^{m,n}_{gal}$ are surfaces with a positive index. 
\end{enumerate}
\end{thm}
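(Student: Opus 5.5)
The plan is to compute the two Chern numbers of $X^{m,n}_{gal}$ with the formulas of \cite{MoTe87} and then substitute them into Hirzebruch's formula $\tau=\frac13(c_1^2-2c_2)$. The degeneration of $X^{m,n}$ is a union of $2mn$ planes, one for each triangle in the $m\times n$ array of squares. So $\deg X^{m,n}=N=2mn$, and the generic projection $X^{m,n}\to\mathbb{CP}^2$ has Galois group $S_{N}$. Let $S$ be the branch curve, of degree $d$, and let $\rho$ be the number of its branch points, meaning simple tangency points after regeneration. The formulas I intend to use are
\[
c_1^2(X^{m,n}_{gal})=\frac{N!}{4}(d-6)^2,\qquad c_2(X^{m,n}_{gal})=N!\Big(3-d+\frac{\rho}{2}\Big),
\]
in the normalization of \cite{MoTe87}. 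Before using them I will recheck them on a known planar case, such as a small Veronese or Hirzebruch example.

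\textbf{Step 1: combinatorial count.} I will count the invariants of the degenerate configuration $X_0$ from Figure~\ref{small}, using the cylindrical identification of the two boundary edges of length $n$.
\begin{enumerate}
\item I count the interior lines, i.e. the edges shared by two triangles. The degenerated branch curve is their union. Each such line regenerates to a conic, so $d=2\cdot\#\{\text{interior lines}\}$.
\item I count the singular vertices by valency, i.e. by the number of planes meeting at the vertex.
\end{enumerate}
The case distinction in the statement comes from this step. When $n=2$ the identification in the torus direction is degenerate: two squares get glued along both of their vertical edges, and vertices coincide. This changes both the line count and the vertex types compared with $n\ge3$. I will therefore carry out the count separately for $n=2$ and for $n\ge 3$, giving $d$ as an explicit expression in $m$ in the first case and in $m,n$ in the second.

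\textbf{Step 2: branch points.} I expect this to be the main obstacle. One has to assign to each vertex type the number of branch points it contributes after regeneration. From the earlier regeneration results for Zappatic and planar degenerations (\cite{AGM1}, \cite{AGM2}, \cite{MoTe87}), a $k$-point with $k$ interior lines contributes a fixed number of branch points, for example $2$ for an inner $2$-point and $6$ for an inner $6$-point. The non-planar gluing along the cylinder may create vertices of a new local type. For those, my first approach is to compute $\rho$ indirectly: from $c_2(X^{m,n})=\chi$ of the smooth surface $\mathbb{CP}^1\times T$, which is $0$, together with the Hurwitz-type relation $\chi(X)=N\cdot\chi(\mathbb{CP}^2)-\chi(\text{ramification data})$ of the generic projection. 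This fixes $\rho$ without analysing each vertex separately. I will cross-check it against the local count wherever the local models are standard.

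\textbf{Step 3: the index and its sign.} I substitute $N=2mn$, $d$ and $\rho$ into $\tau=\frac13(c_1^2-2c_2)$ and factor out $N!$. The target is $\tau=\frac13(4m)!(8m-9)$ for $n=2$ and $\tau=\frac13(2mn)!(mn-3n+3)$ for $n\ge3$. The sign statements are then elementary.
\begin{enumerate}
\item For $n=2$: $8m-9<0$ exactly when $m=1$.
\item For $n\ge 3$: write $mn-3n+3=(m-3)n+3$.
\begin{enumerate}
\item For $m=1$ this equals $3-2n<0$.
\item For $m=2$ it equals $3-n$, which is zero at $n=3$ and negative for $n>3$.
\item For $m\ge3$ it is at least $3>0$.
\end{enumerate}
\end{enumerate}
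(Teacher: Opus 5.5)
\textbf{The formula for $c_2$ is wrong.} Write $h$ for the degree of the branch curve $S$, $\mu$ for its branch points, $\delta$ for its nodes and $\kappa$ for its cusps. The $S_N$-cover has fibres of size $N!$, $N!/2$, $N!/4$, $N!/6$ over $\mathbb{CP}^2-S$, smooth points of $S$, nodes and cusps respectively. Counting Euler characteristics over these strata, and using $\chi(S)=2h-\mu-\delta-\kappa$, gives
\[
c_2(X_{gal})=N!\Big(3-\tfrac12\chi(S)-\tfrac{\delta}{4}-\tfrac{\kappa}{3}\Big)=N!\Big(3-h+\tfrac{\mu}{2}+\tfrac{\delta}{4}+\tfrac{\kappa}{6}\Big).
\]
By Pl\"ucker ($\mu=h(h-1)-2\delta-3\kappa$) this is the Moishezon--Teicher formula the paper uses. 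Your version omits $\delta/4+\kappa/6$. The Veronese check you planned would catch this: for $V_2$ ($N=4$, a sextic with $9$ cusps, no nodes, $\mu=3$) the correct value is $c_2=0$, while yours gives $-36$.

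For $X^{m,n}$, your Euler-characteristic step does correctly give $h=6mn-2n$ and $\mu=2h-3N+c_2(X)=6mn-4n$. Your formula then yields $3\tau/N!=(3mn-n-3)^2+6mn-6$, not $mn-3n+3$. So you cannot avoid counting nodes and cusps, and that count is exactly what the paper's proof supplies:
\begin{enumerate}
\item $3$ cusps at each of the $2n$ two-points and $24$ cusps at each of the $n(m-1)$ six-points, giving $\kappa=24mn-18n$;
\item $24$ nodes at each six-point, plus $4$ nodes for every pair of lines of $S_0$ not meeting at a vertex, giving $\delta=18m^2n^2-12mn^2-42mn+2n^2+30n$ for $n\ge3$.
\end{enumerate}
Your indirect approach can be repaired to reproduce these values for $n\ge 3$. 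Take $K\cdot H=-2n$ and $K^2=c_2(X)=0$, and use $\kappa=12N+9K\cdot H+2K^2-c_2(X)$ together with $2\delta+3\kappa=h^2-3h+3N-c_2(X)$.

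\textbf{Your explanation of the $n=2$ case split is also not what happens.} For $n=2$ the following are unchanged: the line count $3mn-n$, the vertex types ($2n$ two-points and $n(m-1)$ six-points), $h$ and $\kappa$. The only change in the paper is an extra $4(m-1)$ in the node count. In each interior row the two horizontal lines meet at both vertices of the row, so that pair is subtracted twice and must be added back.

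A formula with no node term cannot detect this split at all. The repaired route through $c_2(X)=0$ would give $\frac13(4m)!\,(2m-3)$, which is just the $n\ge3$ formula at $n=2$, not the stated $\frac13(4m)!\,(8m-9)$. This is expected: a degree-$2$ bundle on $T$ is not very ample, so for $n=2$ there is no smooth embedded $\mathbb{CP}^1\times T$ with $c_2=0$ being projected. The stated $n=2$ value comes only from the paper's combinatorial node count on the degenerate configuration.

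Your sign analysis in Step 3 is correct.
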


The index is said to be positive, zero, or negative according to whether the signature of the intersection form on $H^2(X^{m,n}_{gal},\mathbb{Q})$  is positive, zero, or negative. This classification is particularly helpful when constructing divisors with prescribed properties on the surface.

The paper is organized as follows. In Section \ref{App}, we give the constructions of the surfaces. In Section \ref{method}, we explain the methods we use and the terminology related to the paper. In Section \ref{CFG}, we use the Coxeter group theory to prove Theorem \ref{main theorem} and give some open questions. In Section \ref{bgzd}, we focus on the irregularity of the surfaces.
In Section \ref{Chern}, we compute the Chern numbers and topological indices of the surfaces.

\section{Construction of $X^{m,n}$}\label{App}

We take surface $X=\mathbb{CP}^1\times T$, where $T$ is an elliptic curve in $\mathbb{CP}^2$. 
We now describe the construction of surface  $X^{m,n}$. 
Note that $\mathbb{CP}^1$ can be embedded in $\mathbb{CP}^m~(m\geq2)$ and $T$ can be embedded in $\mathbb{CP}^{n-1}~(n \geq 3)$. Consequently, via Segre embedding $\mathbb{CP}^m \times \mathbb{CP}^{n-1} \hookrightarrow \mathbb{CP}^{m(n-1) + m + (n-1)}$, 
the surface $\mathbb{CP}^1\times T$ can be embedded into a larger projective space. We call such an embedding the $(m,n)$-embedding of $\mathbb{CP}^1\times T$ and denote the image of the embedding by $X^{m, n}$. 
 

To study fundamental group $\pi_1(X^{m,n}_{gal})$, we employ a degeneration approach following~\cite{AFT03}, combined with braid monodromy techniques~\cite{L, P}. This method allows us to degenerate the surface $X^{m,n}$ into a union of $2mn$ planes, as illustrated in Figure~\ref{small}.

We now explain the degeneration process in detail. Let $T$ be a normal elliptic curve in $\mathbb{CP}^{n-1}$. It can degenerate into an $n$-degree stick curve that forms a closed loop, as shown in Figure~\ref{nline}.

\begin{figure}[H]
\begin{center}
\scalebox{0.65}{\includegraphics{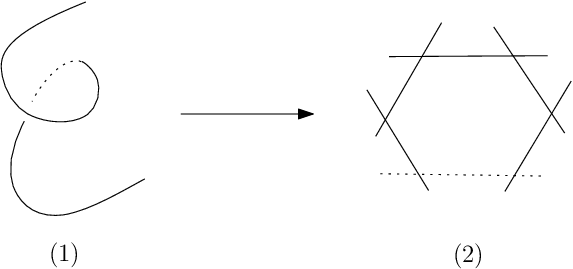}}
\end{center}
\setlength{\abovecaptionskip}{-0.15cm}
\caption{An elliptic normal curve}\label{nline}
\end{figure}

On the other hand, the rational normal curve $\mathbb{CP}^1$ in $\mathbb{CP}^{m}$ can degenerate into an $m$-degree stick curve that forms a path; see Figure~\ref{mlines}.

\begin{figure}[H]
\begin{center}
\scalebox{0.65}{\includegraphics{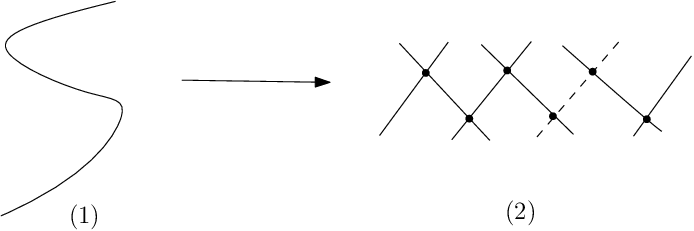}}
\end{center}
\setlength{\abovecaptionskip}{-0.15cm}
\caption{A rational normal curve}\label{mlines}
\end{figure}

The product surface $\mathbb{CP}^1 \times T$ consequently degenerates into $mn$ quadric surfaces. These are represented by the small squares in Figure~\ref{2small}.

\begin{figure}[H]
\begin{center}
\scalebox{0.95}{\includegraphics{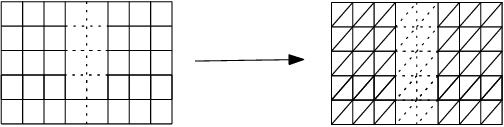}}
\end{center}
\setlength{\abovecaptionskip}{-0.15cm}
\caption{The degeneration of the surfaces $X^{m,n}$}\label{2small}
\end{figure}

Each quadric surface further degenerates into two planes, resulting in a total of $2mn$ planes. These are denoted by small triangles in Figure~\ref{2small}. This degenerate configuration corresponds to the surface $X^{m,n}$ introduced in Section \ref{Introduction}.

\section{Preliminaries and terminology}\label{method}
In this paper, we study the structure of the fundamental groups of the Galois covers of the surfaces $X^{m,n}$, building upon our previous studies of $X^{1,3}$ and $X^{1,n}$ in~\cite{AGTV02} and~\cite{AG04}, respectively. We denote the degenerations of \(X^{m,n}\) by \(X_0^{m,n}\), and these degenerate surfaces with as shown in Figure \ref{Figmn}  will be used throughout the paper for certain values of \(m\) and \(n\).

\begin{figure}[H]
\begin{center}
\scalebox{0.75}{\includegraphics{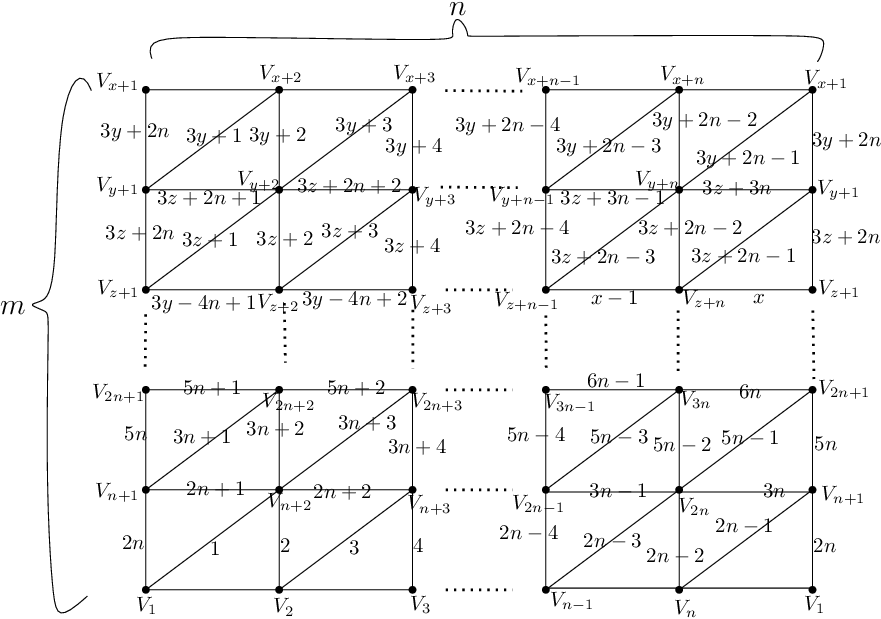}}
\end{center}
\setlength{\abovecaptionskip}{-0.15cm}
\caption{The degeneration of the surfaces $X^{m,n}$}\label{Figmn}
\end{figure}

To label edges in Figure \ref{Figmn}, we use $$x=mn; \ \ y=n(m-1); \ \ z=n(m-2).$$

In this section, we now begin to present all relevant information and lemmas concerning the general surfaces, whose notation will be $X^{m,n}$ for $m \geq 1, \ n \geq 2$, and whose degeneration appears above in Figure \ref{Figmn}.

We aim to determine the structure of fundamental group $\pi_1(X^{m,n}_{gal})$. First, we consider a generic projection from the degenerated surface $X^{m,n}_0$ to $\mathbb{CP}^2$, obtaining branch curve $C^{m,n}_0$, which is a union of $3y + 2n$ lines. These lines correspond to the images of the $3y + 2n$ edges in Figure~\ref{Figmn}, where the edges are numbered from left to right and from bottom to top, taking into account the identifications resulting from the degeneration of the torus. Through the regeneration process, we then obtain branch curve \( C^{m,n} \), which arises from a generic projection of the surface \( X^{m,n} \) to \( \mathbb{CP}^2 \). These processes can be intuitively understood from Figure~\ref{bbb}.

\begin{figure}[H]
\[
\begin{CD}
	X^{m,n}\subseteq \mathbb{CP}^N  @>\text{degeneration}>> X^{m,n}_0\subseteq \mathbb{CP}^N \\
	@V\text{generic~projection}VV                      @VV\text{generic~projection}V \\
	C^{m,n}\subset \mathbb{CP}^2 @<\text{regeneration}<< C^{m,n}_0\subset \mathbb{CP}^2\\
\end{CD}
\]
\caption{Degeneration and regeneration process.}
\label{bbb}
\end{figure}

Curve $C^{m,n}$ is a cuspidal curve of degree $2(3y + 2n)$ (since each line in $C^{m,n}_0$ regenerates into a conic or a pair of parallel lines in $C^{m,n}$). Applying  van Kampen Theorem for cuspidal curves as presented in~\cite{vk}, we compute group $\pi_1(\mathbb{CP}^2 - C^{m,n})$, the fundamental group of the complement of $C^{m,n}$ in $\mathbb{CP}^2$.

To derive an explicit presentation, we first adopt the global numbering of the lines in the degeneration as shown in Figure~\ref{Figmn}, and denote the standard generators by $1, 1', \dots, {3y+2n}, {(3y+2n)}'$.

We define the following group $$G^{m,n}=\frac{\pi_1(\mathbb{CP}^2-C^{m,n})}{\langle j^2,j'^2 \rangle},$$ 
where $\langle j^2, j'^2 \rangle$ denotes the normal subgroup generated by the elements $j^2$ and $j'^2$. 
There exists a surjection from $G^{m,n}$ onto the symmetric group $S_{2mn}$, 
defined by mapping each generator $j$ (respectively $j'$) to a transposition $(\alpha, \beta)$, 
where $j$ corresponds to number of an edge in Figure~\ref{Figmn},
and $\alpha, \beta$ are the labels of the two triangles intersecting along edge $j$. 
Furthermore, the fundamental group $\pi_1(X^{m,n}_{gal})$ of the Galois cover of $X^{m,n}$ 
is known to be the kernel of this surjection. 
For further details on Galois covers of such surfaces, we refer to~\cite{MoTe87a} and~\cite{MoTe87}.

Because it is very difficult to identify the group $G^{m,n}$, we define a quotient of $G^{m,n}$  as follows:
$$G^{m,n}_1 = \frac{\pi_1(\mathbb{CP}^2-C^{m,n})}{\langle j^{-1}j', \ \ j^2,j'^2 \rangle}=\frac{G^{m,n}}{\langle j^{-1}j' \rangle}.$$
This implies that we first determine a quotient of $\pi_1(X^{m,n}_{gal})$, which will subsequently enable us to deduce the structure of $\pi_1(X^{m,n}_{gal})$ itself.

Now we refer to the types of relations arising from the van Kampen Theorem in group $\pi_1(\mathbb{CP}^2 - C^{m,n})$ that we compute. The relations originating from each vertex depend on the local ordering of the lines at that point, as illustrated in Figure~\ref{Figmn}.
We then compile a list of relations according to the types of singularities:
\begin{enumerate}
\item For every branch point of a conic, a relation of the form $\gamma = \gamma'$ where $\gamma,\gamma'$ are certain conjugates of the generators ${j}$ and ${j'}$,
\item\label{vK2} for every node,
$[\gamma,\gamma'] = 1$ where $\gamma,\gamma'$ are certain conjugates of $i$ and $j$, where $i,j$ are the lines meeting in this node,
\item\label{vK3} for every cusp, $\langle\gamma,\gamma'\rangle=1$ where $\gamma,\gamma'$ are as in (\ref{vK2}),
\item\label{vK4} the ``projective relation'' $\prodl_{j=1}^{3y+2n} {j'}\ j = e$,
\item\label{par} an extra commutation relation for every pair of disjoint edges from Figure \ref{Figmn}, which meet in the branch curve.
\end{enumerate}

Group $\pi_1(\mathbb{CP}^2 - C^{m,n})$ is an amalgamated product of ``local groups", one for each vertex. These groups are generated by the elements corresponding to the lines meeting at the vertex, modulo the projective relation~(\ref{vK4}) and the relations of type~(\ref{par}). We refer to relations of type~(\ref{vK2}) and~(\ref{par}) as \emph{commutation relations}, and the relation of type~(\ref{vK3}) as a \emph{triple relation}. Once the presentation of $\pi_1(\mathbb{CP}^2 - C^{m,n})$ is obtained, we may proceed with the subsequent steps as outlined above.

To provide a clearer context for our work, we begin by explaining some aspects of the computation of fundamental groups.  
We first describe the types of singularities that occur in \(X_0^{m,n}\) (see Figure~\ref{Figmn}) and present related lemmas.

\begin{itemize}
\item In the degeneration of $X^{m,n}$, there exist points numbered $V_2, \dots, V_n$, each being the intersection of two lines, as illustrated in Figure~\ref{Fi1}-(1) for $i < j$.
\item There are also points numbered $V_1, V_{x+1}, \dots, V_{x+n}$, each likewise formed by the intersection of two lines, though with a distinct numbering scheme, as shown in Figure~\ref{Fi1}-(2) for $i < j$.
\end{itemize}

\begin{figure}[H]
\begin{center}
\scalebox{0.75}{\includegraphics{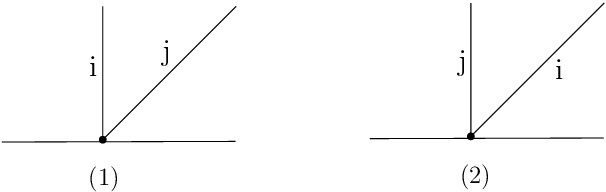}}
\end{center}
\setlength{\abovecaptionskip}{-0.15cm}
\caption{Intersections of two lines of two types}\label{Fi1}
\end{figure}

Now, we present the first lemma, which relates to the singularities illustrated in Figure~\ref{Fi1}-(1) and Figure~\ref{Fi1}-(2), where each singularity arises as the intersection of two lines.

\begin{lemma}\label{2pt-lemma}
Each vertex among \( V_2, \dots, V_n \) and \( V_1, V_{x+1}, \dots, V_{x+n} \) contributes a unique relation of type~(\ref{vK3}) to the presentation of the group
\[
G^{m,n}_1 = \frac{\pi_1(\mathbb{CP}^2 - C^{m,n})}{\left\langle j^{-1}j',\ j^2,\ j'^2 \right\rangle}.
\]
\end{lemma}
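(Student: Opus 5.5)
The plan is to carry out the standard local computation for a vertex where exactly two lines of $C^{m,n}_0$ meet, as in Figure~\ref{Fi1}, and then pass to the quotient $G^{m,n}_1$. Fix such a vertex with lines $i<j$. Its local configuration is the same as in the planar degenerations of \cite{MoTe87} and \cite{AGTV02}: the point lies on the boundary of the degenerated surface, so only one plane meets it in a neighbourhood, and exactly two edges (one of them shared with another triangle) pass through it. Under regeneration, one line becomes a conic with a branch point. The other line becomes a pair of parallel lines, and these are tangent to the conic near the vertex. Each tangency regenerates into three cusps. The braid monodromy of this local model is known: it is the product of the half-twist for the branch point and the cube of the full twist for each tangency. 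Using the van Kampen theorem for cuspidal curves \cite{vk}, I will write the local relations explicitly. This gives one branch-point relation of type (1) for the conic, say $j=j'$ if $j$ is the conic, and three triple relations of the form $\langle i,j\rangle=\langle i',j\rangle=\langle i, j^{-1}j'j\rangle=1$, with the conjugations determined by the local numbering. The two numbering schemes in Figure~\ref{Fi1}-(1) and Figure~\ref{Fi1}-(2) differ only in which line regenerates first. They therefore only change which conjugates of $j,j'$ appear, and I will treat the two cases in parallel.

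Next I pass to $G^{m,n}_1$ by imposing $j=j'$, $i=i'$ and $j^2=i^2=1$. The branch-point relation becomes trivial. The conjugates $j^{-1}j'j$ and $i'^{-1}ii'$ collapse to $j$ and $i$ respectively. Each of the three triple relations then reduces to the same relation, $\langle i,j\rangle=1$, which for involutions is $(ij)^3=1$. So the vertex contributes exactly one relation of type~(\ref{vK3}), and no commutation relation of type~(\ref{vK2}), since the two lines actually meet.

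The step I expect to be the main obstacle is getting the correct conjugates in the local monodromy for each of the two numbering types. In particular, I must check that in the case of Figure~\ref{Fi1}-(2) the relations are not twisted by some other generator that fails to cancel in the quotient. To handle this, I will follow the computation in \cite{AGTV02} and \cite{AG04}. There, the same two-line vertices appear for $X^{1,n}$, and they show that every conjugating element involved is a word in $i,i',j,j'$ alone. Once that is established, the reduction under $j=j'$ is immediate. The uniqueness claim is then just the observation that all surviving relations coincide in $G^{m,n}_1$.
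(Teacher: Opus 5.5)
Your route is the paper's: it lists the standard local relations at a two-line vertex and observes that modulo $j=j'$, $j^2=e$ only $\langle i,j\rangle=e$ survives. Those relations are three cusp relations and one branch-point relation, with the roles of the two lines exchanged between Figure~\ref{Fi1}-(1) and Figure~\ref{Fi1}-(2); note also that three planes, not one, meet at such a vertex.

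When you write the relations out, the branch-point relation is not $j=j'$. It is $j'=(j\,i'\,i)\,j\,(j\,i'\,i)^{-1}$ at $V_2,\dots,V_n$, with cusp relations $\langle i,j\rangle=\langle i',j\rangle=\langle i^{-1}i'i,j\rangle=e$. At $V_1,V_{x+1},\dots,V_{x+n}$ it is $i=(j'\,j)\,i'\,(j'\,j)^{-1}$, with cusp relations $\langle i',j\rangle=\langle i',j'\rangle=\langle i',j^{-1}j'j\rangle=e$. The collapse to $\langle i,j\rangle=e$ rests on these particular conjugators, not merely on their being words in $i,i',j,j'$: for instance, $j'=i\,j\,i^{-1}$ would become $[i,j]=e$ in $G^{m,n}_1$.
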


\begin{proof}
The relations in $\pi_1(\mathbb{CP}^2 - C^{m,n})$ arising from each of the vertices $V_2, \dots, V_n$ are:
\begin{equation*}\label{}
\langle{i},{j}\rangle=\langle{i'},{j}\rangle=\langle i^{-1}\ {i'}\ {i},{j}\rangle=e,
\end{equation*}
\begin{equation*}\label{}
{j'}={j}\ {i'}\ {i}\ {j}\ {i^{-1}}\ {i'^{-1}}\ {j^{-1}}.
\end{equation*}
The relations in $\pi_1(\mathbb{CP}^2 - C^{m,n})$ coming from each of the vertices $V_1, V_{x+1}, \dots, V_{x+n}$ are:
\begin{equation*}\label{}
\langle i',j\rangle=\langle i',j'\rangle=\langle i',j^{-1}\ j'\ j\rangle=e,
\end{equation*}
\begin{equation*}\label{}
i={j'}\ {j}\ {i'}\ {j^{-1}}\ {j'^{-1}}.
\end{equation*}

If we consider the constraints in group \(G^{m,n}_1\), the only remaining relation is \(\langle i, j \rangle = e\).
\end{proof}

We now turn to points that arise as intersections of six lines. In Figure~\ref{Figmn}, there are only two types of such points, which are as follows:

\begin{itemize}
\item These are the points numbered as 
$V_{n+2},\dots,V_{2n}, V_{2n+2},\dots,V_{3n},\dots, V_{z+2},\dots,V_{z+n},V_{y+2},\dots,V_{y+n}$, with $a<b<c<d<e<f$, see Figure \ref{Fi2}-(1).
\item Vertices $V_{n+1},V_{2n+1},\dots,V_{z+1},V_{y+1}$ are intersections of six lines as well, with $a<b<c<d<e<f$, but with a different labeling, see Figure \ref{Fi2}-(2).

\end{itemize}

\begin{figure}[H]
\begin{center}
\scalebox{0.75}{\includegraphics{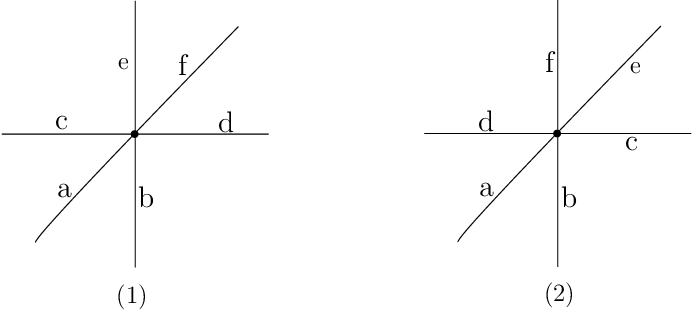}}
\end{center}
\setlength{\abovecaptionskip}{-0.15cm}
\caption{Two types of intersection of six lines}\label{Fi2}
\end{figure}

Continuously, we give some lemmas related to singularities in Figure \ref{Fi2}-(1) and \ref{Fi2}-(2) that are intersections of six lines.

\begin{lemma}\label{6pt-1-result}
The points that are intersections of six lines and of the type that appears in Figure \ref{Fi2}-(1), contribute the following relations to the presentation of $G^{m,n}_1:$
\begin{equation}\label{eq.1}
\langle a,b\rangle=\langle b,d\rangle=\langle d, f\rangle=\langle f, e\rangle=\langle e,c \rangle=\langle c,a \rangle=e,
\end{equation}
\begin{equation}\label{eq.2}
[a,d]=[a,e]=[a,f]=[b,c]=[b,e]=[b,f]=[d,e]=[d,c]=[c,f]=e,
\end{equation}
\begin{equation}\label{eq.3}
c\ a\ b\ a\ c = d\ f\ e\ f\ d.
\end{equation}
\end{lemma}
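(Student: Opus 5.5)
Our plan follows the same pattern as Lemma~\ref{2pt-lemma}. We first write the local braid monodromy of a six-point of the type in Figure~\ref{Fi2}-(1), and read off the van Kampen relations it contributes to $\pi_1(\mathbb{CP}^2-C^{m,n})$. We then pass to the quotient $G^{m,n}_1$, where $j=j'$ and $j^2=e$.

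\emph{Step 1: local structure of the regenerated six-point.} The six lines $a<b<c<d<e<f$ meet at the vertex. In the degeneration, the pairs that are edges of a common triangle are $a$--$b$, $b$--$d$, $d$--$f$, $f$--$e$, $e$--$c$ and $c$--$a$. These pairs form a hexagonal cycle around the vertex, which is the picture in Figure~\ref{Fi2}-(1). The remaining pairs are not adjacent. We will invoke the standard regeneration of a six-point, as computed in \cite{AGRSV} and \cite{degree6} (see also \cite{MoTe87}). Each of the six lines regenerates into a conic, and each adjacent pair contributes three cusps. Each non-adjacent pair contributes nodes, with one exception: the two ``diagonal'' lines ($a$ and $f$, or whichever pair is extremal in the local ordering) do not meet in the degeneration. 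For that pair the local monodromy gives a nontrivial relation rather than a commutation.

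We will list the resulting relations in $\pi_1$. There are triple relations $\langle\gamma,\gamma'\rangle=e$ for conjugates of $i,i'$ and $j,j'$ over each adjacent pair, and commutators $[\gamma,\gamma']=e$ over each non-adjacent pair. There are also the branch-point relations $j'=$(a conjugate of $j$). Finally there is one extra relation coming from the ``parasitic''/diagonal part. In the standard computation this takes the form $c'\,a\,b\,a^{-1}c'^{-1}=d'f'e f^{-1}d'^{-1}$ up to conjugation.

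\emph{Step 2: pass to $G^{m,n}_1$.} Once $j=j'$ and $j^2=e$ are imposed, every conjugate $\gamma$ of $j$ or $j'$ appearing in a triple or commutator relation simplifies to $j$. This uses the other relations already present, exactly as in the proof of Lemma~\ref{2pt-lemma}. For example, $\langle i^{-1}i'i,j\rangle=e$ becomes $\langle i,j\rangle=e$. The branch-point relations become trivial. The triple relations collapse to the six braid relations in \eqref{eq.1}, and the commutators collapse to the nine relations in \eqref{eq.2}. Altogether these are the $15-6=9$ non-adjacent pairs. Since involutions give $a^{-1}=a$, the extra relation becomes $c\,a\,b\,a\,c=d\,f\,e\,f\,d$, which is \eqref{eq.3}.

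\emph{Main obstacle.} The hardest step is the last one. We must obtain the correct form of the extra relation from the braid monodromy of the six-point, and check that after simplification it takes exactly the form \eqref{eq.3} rather than an equivalent conjugate. Our first attempt will be to take the known presentation for a six-point in a nonplanar (cycle) configuration from \cite{A} and \cite{AGRSV}. We will relabel it to the local ordering of Figure~\ref{Fi2}-(1), and then simplify using \eqref{eq.1} and \eqref{eq.2} to move the conjugating elements. If that fails, we will verify \eqref{eq.3} by checking that it is consistent with the surjection to $S_{2mn}$, since both sides must map to the same permutation.
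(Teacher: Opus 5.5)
\textbf{There is a genuine gap.} Your skeleton is the paper's: quote the local six-point presentation, then impose $j=j'$ and $j^2=e$. But Step~2 asserts the one thing that is false, and it is the step the lemma hinges on. After imposing $j=j'$, $j^2=e$, the six-point relations do \emph{not} collapse to \eqref{eq.1}--\eqref{eq.2}. The paper obtains
\[
\langle a,b\rangle=\langle a,c\rangle=\langle d,f\rangle=\langle e,f\rangle=\langle a\,b\,a,c\rangle=\langle f\,e\,f,d\rangle=e,\qquad [a,d]=[a,e]=[a,f]=[c,f]=[f,a\,b\,a]=[a\,b\,a,f\,e\,f]=e,
\]
together with the cyclic relation $a\,b\,a\,c\,a\,b\,a=f\,e\,f\,d\,f\,e\,f$.

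Here the conjugating elements belong to \emph{other} lines: $b$ is conjugated by $a$, and $e$ by $f$. This is unlike the $i^{-1}i'i$ of Lemma~\ref{2pt-lemma}, so $j=j'$ and $j^2=e$ do not remove them. Consequently $\langle b,d\rangle$, $\langle e,c\rangle$ and $[b,c],[b,e],[b,f],[d,e],[d,c]$ are not obtained by collapsing. They have to be derived from the cyclic relation together with the relations above. Likewise \eqref{eq.3} comes from the cyclic relation by rewriting $a\,b\,a\,c\,a\,b\,a=c\,a\,b\,a\,c$ and $f\,e\,f\,d\,f\,e\,f=d\,f\,e\,f\,d$, which are $\langle aba,c\rangle$ and $\langle fef,d\rangle$.

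The cyclic relation is genuinely needed. Send $d,e,f\mapsto 1$ and $a\mapsto(1\,2)$, $b\mapsto(2\,3)$, $c\mapsto(1\,3)$ in $S_3$. Every relation in the display holds, because $aba\mapsto c$. Yet $[b,c]$ maps to a nontrivial element, and the cyclic relation is exactly what fails there. So the missing idea is this rewriting via the cyclic relation, which your proposal replaces by an incorrect ``everything simplifies as in Lemma~\ref{2pt-lemma}''.

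Two further points fail as written. In Step~1 you attach the extra relation to a ``diagonal'' pair such as $a,f$ that supposedly does not commute. That contradicts $[a,f]=e$ in \eqref{eq.2}, and also your own count of nine commutators. The extra relation belongs to the closed hexagon $a$--$b$--$d$--$f$--$e$--$c$ of triangles around the vertex (the $E_6$ basic cycle relation), not to any pair of lines.

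Your fallback of verifying \eqref{eq.3} through the surjection onto $S_{2mn}$ proves nothing. Equal images only show that $c\,a\,b\,a\,c\,(d\,f\,e\,f\,d)^{-1}$ lies in the kernel $K_1(T)$. That kernel is far from trivial: the paper's main theorem concerns elements $\gamma_j\gamma_1^{-1}$ that map to the identity permutation but are nontrivial in $G^{m,n}_1$.
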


\begin{proof}
The relations arising from a point that is an intersection of six lines have already been presented in~\cite{ATXY}.  
Taking quotient \(G^{m,n}_1\) by the relations \(j = j'\) and \(j^2 = e\) for all generators, we immediately obtain:
\begin{equation}\label{}
\langle a,b\rangle=\langle a,c \rangle=\langle d, f\rangle=\langle e, f\rangle=\langle  a\ b\ {a},c \rangle=\langle f\ e\ f,d \rangle=e,
\end{equation}
\begin{equation}\label{}
[a,d]=[a,e]=[a,f]=[c,f]=e,
\end{equation}
\begin{equation}\label{}
[f, a\ b \ a]=[a\ b\ a, f\ e\ f]=e,
\end{equation}
\begin{equation}\label{cyclic1}
a\ b\ a\ c\ a\ b\ a= f\ e\ f\ d\ f\ e\ f.
\end{equation}
We then use equation \eqref{cyclic1} to further simplify these relations. Since some relations are redundant, we ultimately obtain \eqref{eq.1}--\eqref{eq.3} from the lemma.
\end{proof}

\begin{lemma}\label{6pt-2-result}
Each one of the points that is an intersection of six lines, as depicted in Figure~\ref{Fi2}-(2), contributes the following relations to \( G^{m,n}_1 \colon \) 

\begin{equation}\label{eq.4}
\langle a,b\rangle=\langle b,c\rangle=\langle c,e\rangle=\langle e, f\rangle=\langle f, d\rangle=\langle d, a\rangle=e
\end{equation}
\begin{equation}\label{eq.5}
[a, c]=[a, e]=[a, f]=[b,d]=[b,e]=[b,f]=[c,f]=[c,d]=[d,e]=e,
\end{equation}
\begin{equation}\label{eq.6}
c\ a\ b\ a\ c= d\ f\ e\ f\ d.
\end{equation}
\end{lemma}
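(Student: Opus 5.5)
The plan mirrors the proof of Lemma~\ref{6pt-1-result}: take the relations from the braid monodromy of a six-line vertex, pass to $G^{m,n}_1$, and simplify. I will use the local presentation of $\pi_1$ near an intersection of six lines computed in~\cite{ATXY}, rewritten in the labelling of Figure~\ref{Fi2}-(2). This figure differs from Figure~\ref{Fi2}-(1) only by a relabelling of the six lines around the vertex. Comparing the two figures, the cyclic order of lines around the vertex changes from $a,b,d,f,e,c$ to $a,b,c,e,f,d$; in effect $c$ and $d$ exchange roles while the diagonal pairs $(a,b)$ and $(e,f)$ stay put. So the local braid monodromy is the one used for Lemma~\ref{6pt-1-result}, conjugated by this permutation of the lines. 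This gives the full list in $\pi_1(\mathbb{CP}^2-C^{m,n})$: triple relations for adjacent edges, commutators for non-adjacent ones, and the relations coming from the branch points, all involving $j,j'$ and their conjugates.

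Next I pass to $G^{m,n}_1$ by imposing $j=j'$ and $j^2=e$. As in the proof of Lemma~\ref{6pt-1-result}, most conjugates collapse. I expect to obtain the triple relations $\langle a,b\rangle=\langle e,f\rangle=e$, triple relations of $c$ and $d$ with the appropriate conjugates such as $aba$ and $fef$, a few commutators such as $[a,e]=[a,f]=[c,f]=e$, commutators between $aba$ and $fef$, and one long relation of the form
\begin{equation*}
a\,b\,a\,x\,a\,b\,a = f\,e\,f\,y\,f\,e\,f,
\end{equation*}
where $\{x,y\}=\{c,d\}$ after the relabelling.

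The main step, and the expected obstacle, is reducing this list to \eqref{eq.4}--\eqref{eq.6}. I would use the long relation to move $c$ or $d$ across the conjugating words, exactly as \eqref{cyclic1} was used before. Since $a,b$ satisfy a triple relation and have order $2$, we have $aba=bab$. Combined with the long relation, this should turn the relations involving $aba$ into the missing triple relations $\langle b,c\rangle=\langle d,a\rangle=e$ and the commutators $[b,d]=[b,e]=[c,d]=[d,e]=e$. The work is a careful case check that each conjugated relation reduces to a plain Coxeter relation among the six generators and that none is lost. Once the Coxeter relations \eqref{eq.4}, \eqref{eq.5} are in hand, the long relation simplifies, via $aba=bab$ and the commutations, to the form $c\,a\,b\,a\,c=d\,f\,e\,f\,d$ of \eqref{eq.6}. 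Finally, I check that the remaining original relations are consequences of \eqref{eq.4}--\eqref{eq.6}, so they are redundant and can be dropped.
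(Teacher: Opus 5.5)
Your overall line (take the six-point relations from \cite{ATXY}, impose $j=j'$ and $j^2=e$, and simplify) matches the paper's proof, which consists of exactly these two steps. The gap is in how you obtain the input relations. The paper takes the presentation for the numbering of Figure~\ref{Fi2}-(2) directly from \cite{ATXY}. You instead produce it by permuting labels in the presentation for Figure~\ref{Fi2}-(1), and that transfer is not justified.

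As the paper notes in Section~\ref{method}, the relations coming from a vertex depend on the local ordering of the lines, i.e.\ on the order $a<b<\cdots<f$ in the fiber. They do not depend only on which lines are adjacent around the hexagon. Reordering the lines changes the local braid monodromy by conjugation with a braid lifting the permutation. The Hurwitz action of that braid replaces generators by conjugates of other generators; it does not simply rename them.

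Lemma~\ref{2pt-lemma} already shows this. The two numbering schemes of a two-line vertex give these presentations:
\begin{itemize}
\item $\langle i,j\rangle=\langle i',j\rangle=\langle i^{-1}i'i,j\rangle=e$, with $j'$ a conjugate of $j$;
\item $\langle i',j\rangle=\langle i',j'\rangle=\langle i',j^{-1}j'j\rangle=e$, with $i$ a conjugate of $i'$.
\end{itemize}
Neither is obtained from the other by exchanging $i$ and $j$, and they agree only after passing to $G^{m,n}_1$. So your expectation that ``most conjugates collapse'' is exactly the claim that needs proof. To close the gap, you can use the computation for this numbering itself, as the paper does. Alternatively, write down the reordering braid and check that every conjugation it introduces disappears once $j=j'$, $j^2=e$ and the other local relations are imposed.

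A smaller point: $c\leftrightarrow d$ alone does not match the two hexagons, since it sends $a,b,d,f,e,c$ to $a,b,c,f,e,d$. You also need $e\leftrightarrow f$. The long relation then reads $d\,aba\,d=c\,fef\,c$, and reaching \eqref{eq.6} also uses $[c,d]=e$. The cleaner match is $a\leftrightarrow b$, which carries \eqref{eq.1}--\eqref{eq.3} literally onto \eqref{eq.4}--\eqref{eq.6} via $aba=bab$. This confirms that the target relations are the relabeled ones. It does not remove the ordering issue, since any such swap changes the order of the lines.
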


\begin{proof}
The relations which are associated with this point, appear in \cite{ATXY}. The constraints in $G^{m,n}_1$ simplify them to \eqref{eq.4}-\eqref{eq.6}.
\end{proof}

\begin{lemma}\label{ii}
Given relations \eqref{eq.1}-\eqref{eq.3} in Lemma \ref{6pt-1-result}, the equalities  

\begin{equation}\label{44}
b~a~c~e~f=a~c~e~f~d=c~e~f~d~b=e~f~d~b~a=f~d~b~a~c=d~b~a~c~e 
\end{equation}
hold in $G^{m,n}_1$ as well.
\end{lemma}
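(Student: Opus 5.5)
The plan is to reduce each of the five equalities in \eqref{44} to a single conjugation identity, and then derive that identity from \eqref{eq.3} using only \eqref{eq.1}, \eqref{eq.2} and the fact that every generator is an involution in $G^{m,n}_1$. We use throughout that $\langle x,y\rangle=e$ means $xyx=yxy$, and that $[x,y]=e$ means $xy=yx$.

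Write the six words in \eqref{44} as $w_1,\dots,w_6$. Consecutive words share a common four-letter block: $w_1=b\,P$ and $w_2=P\,d$ with $P=a\,c\,e\,f$. Hence $w_1=w_2$ is equivalent to
\[
d=P^{-1}\,b\,P=f\,e\,c\,a\,b\,a\,c\,e\,f .
\]
By \eqref{eq.3} the middle block $c\,a\,b\,a\,c$ equals $d\,f\,e\,f\,d$, so it suffices to show $f\,e\,(d\,f\,e\,f\,d)\,e\,f=d$. Since $[d,e]=e$, the left side becomes $f\,d\,(e\,f\,e\,f\,e)\,d\,f$. From $\langle e,f\rangle=e$ we get $e\,f\,e\,f\,e=f$, which leaves $f\,d\,f\,d\,f$. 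From $\langle d,f\rangle=e$ this equals $d\,f\,d\,d\,f=d$. This proves $w_1=w_2$, and I have checked this computation.

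For the remaining equalities $w_k=w_{k+1}$ I would proceed in the same way. Each is equivalent to a statement that one letter of the hexagon $a\!-\!b\!-\!d\!-\!f\!-\!e\!-\!c\!-\!a$ equals the conjugate of the next one by the intervening four-letter block. For example, $w_2=w_3$ is equivalent to $a=(c\,e\,f\,d)\,b\,(c\,e\,f\,d)^{-1}$. The natural organizing principle is the dihedral symmetry of the hexagon: the relations \eqref{eq.1} and \eqref{eq.2} say exactly that hexagon-adjacent letters braid and all non-adjacent letters commute. These relations are invariant under rotating the hexagon.

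So I would first prove that \eqref{eq.3} implies its rotated versions, for instance $a\,b\,d\,b\,a=f\,e\,c\,e\,f$. Then each $w_k=w_{k+1}$ is the rotated copy of the computation above. To get a rotated version of \eqref{eq.3}, I would conjugate \eqref{eq.3} by suitable letters and cancel using braid relations. A cleaner route is to use the first equality in the form $d=P^{-1}bP$ and conjugate it by generators to produce the others.

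The main obstacle is precisely this step: \eqref{eq.3} is not visibly rotation-invariant, so some bookkeeping is needed to show that all its rotations follow from it together with \eqref{eq.1}--\eqref{eq.2}. If the symmetry argument becomes awkward, the fallback is to verify each of the four remaining conjugation identities directly. Each such check should close after a few braid moves, exactly as in the computation above.
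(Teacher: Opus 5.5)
Your reduction of each equality in \eqref{44} to a conjugation identity is right. Your check of $w_1=w_2$ is also correct, and it is essentially the paper's computation (the paper writes it as $b=a\,c\,e\,f\,d\,f\,e\,c\,a$).

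The gap is that the other four equalities rest entirely on the claim that the rotations of \eqref{eq.3} follow from \eqref{eq.1}--\eqref{eq.3}. You announce this claim but do not prove it, and your fallback would not go through as described:
\begin{itemize}
\item $w_2=w_3$ reads $a=c\,e\,(f\,d\,b\,d\,f)\,e\,c$, so it needs $f\,d\,b\,d\,f=e\,c\,a\,c\,e$.
\item $w_3=w_4$ reads $a=b\,d\,f\,e\,c\,e\,f\,d\,b$, so it needs $d\,b\,a\,b\,d=f\,e\,c\,e\,f$.
\end{itemize}
These correspond to splitting the hexagon $a\!-\!b\!-\!d\!-\!f\!-\!e\!-\!c$ into the complementary paths $\{b,d,f\}\mid\{e,c,a\}$ and $\{a,b,d\}\mid\{f,e,c\}$. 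By contrast, \eqref{eq.3} only concerns the split $\{c,a,b\}\mid\{d,f,e\}$. So there is no middle block into which \eqref{eq.3} can be substituted ``exactly as above''.

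The missing step is short: conjugate \eqref{eq.3}.
\begin{itemize}
\item Conjugating by $c\,d$ (these commute) turns the left side into $d\,a\,b\,a\,d=d\,b\,a\,b\,d$. The right side becomes $(d\,c\,d)\,f\,e\,f\,(d\,c\,d)=c\,f\,e\,f\,c=f\,c\,e\,c\,f=f\,e\,c\,e\,f$.
\item Conjugating by $b\,e$ turns the left side into $e\,c\,(b\,a\,b\,a\,b)\,c\,e=e\,c\,a\,c\,e$. The right side becomes $e\,b\,d\,(e\,f\,e)\,d\,b\,e=b\,d\,f\,d\,b=f\,d\,b\,d\,f$.
\end{itemize}
A rotation by three steps maps \eqref{eq.3} to itself, so these two relations together with \eqref{eq.3} exhaust its rotations. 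With them your rotated-copy argument is complete:
\begin{itemize}
\item $w_1=w_2$ and $w_4=w_5$ use \eqref{eq.3} (note $b\,a\,c\,a\,b=c\,a\,b\,a\,c$);
\item $w_2=w_3$ and $w_5=w_6$ use the $be$-conjugate;
\item $w_3=w_4$ uses the $cd$-conjugate.
\end{itemize}
Conceptually, \eqref{eq.1}, \eqref{eq.2} and involutivity present the affine Coxeter group $\tilde A_5$. Relation \eqref{eq.3} says that one translation by a root is trivial, and conjugation carries this to the other roots.

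The paper instead obtains $w_2=w_3$ by rewriting \eqref{eq.3} as $b\,c\,a\,c\,b=d\,f\,e\,f\,d$ and moving letters by hand. It only asserts that the remaining equalities are similar. Once the two conjugated relations are in place, your symmetry argument handles all five equalities uniformly.
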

    
\begin{proof}
We prove the first two equalities; the others follow similarly.

For the first equality, we proceed as follows: From relation~\eqref{eq.3}, we obtain
$
b = a~c~f~e~f~d~f~e~f~c~a.
$
Given that \([f, a] = [f, b] = [f, c] = e\), it follows that
\[
b = a~ c~ e~ f~ d~ f~ e~ c~ a,
\]
and therefore
$
b~ a~ c~ e~ f = a~ c~ e~ f~ d.$

For the second equality, we argue as follows: Again from relation~\eqref{eq.3}, we have
$
b~ c~ a~ c~ b~ e~ f = e~ f~ d.
$
Since \([b, e] = [b, f] = e\), this implies
\[
a~ c~ e~ f = c~ b~ e~ f~ d~ b.
\]
Furthermore, using \(\langle b, d \rangle = e\) and the identity \(c~ b~ e~ f~ d~ b = c~ e~ f~ b~ d~ b\), we derive
\[
a~ c~ e~ f = c~ e~ f~ d~ b~ d,
\]
and consequently,
$
a~ c~ e~ f~ d = c~ e~ f~ d~ b.
$
\end{proof}

Given relations \eqref{eq.4}--\eqref{eq.6} in Lemma~\ref{6pt-2-result}, we apply a similar method to prove the following equalities in \(G^{m,n}_1\):
\begin{equation}\label{444}
b~ a~ d~ f~ e = a~ d~ f~ e~ c = d~ f~ e~ c~ b = f~ e~ c~ b~ a = e~ c~ b~ a~ d = c~ b~ a~ d~ f~.
\end{equation}

Suppose we have obtained the full presentation of \(G^{m,n}_1\)(i.e., all relations stated in the lemmas above). We then construct dual graph \(T\) corresponding to the degeneration. To each numbered edge in the degeneration, we associate a line connecting every pair of triangles that intersect along that edge, see Figure~\ref{CPmncombine}. 

\begin{figure}[H]
\begin{center}
\scalebox{0.65}{\includegraphics{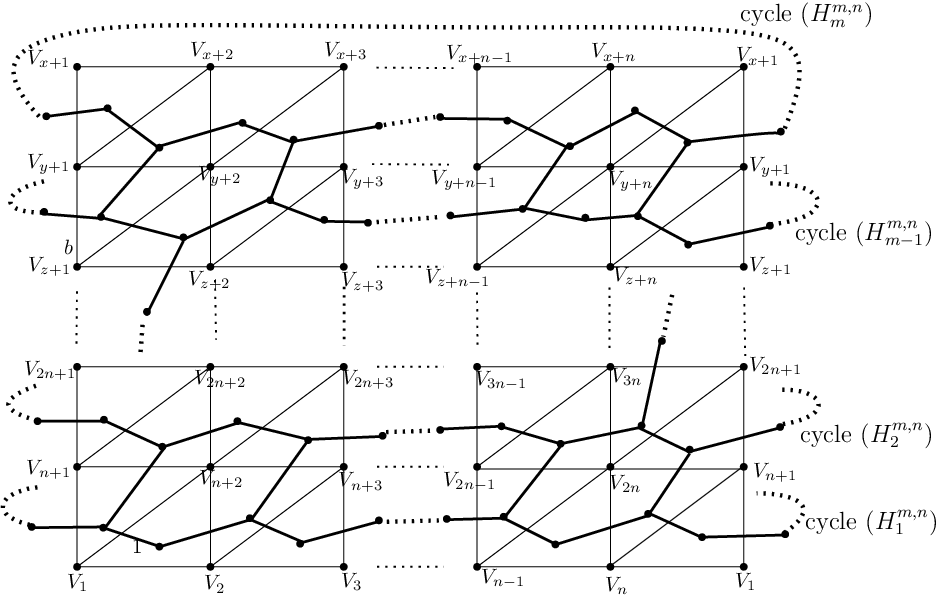}}
\end{center}
\setlength{\abovecaptionskip}{-0.15cm}
\caption{Dual graph in the  surface $X_0^{m,n}$}\label{CPmncombine}
\end{figure}

We note that the equalities from~\eqref{44} correspond to all cycles in the dual graph (see Figure~\ref{6circle}-(1)), that originate from intersections of six lines as shown in Figure~\ref{Fi2}-(1). Similarly, the equalities in~\eqref{444} correspond to all cycles in the dual graph (see Figure~\ref{6circle}-(2)), that arise from intersections of six lines in Figure~\ref{Fi2}-(2). For convenience, we refer to the relations defined by the equalities in~\eqref{44} and~\eqref{444} as the \emph{ \(E_6\) basic cycle relations}.

\begin{figure}[H]
\begin{center}
\scalebox{0.65}{\includegraphics{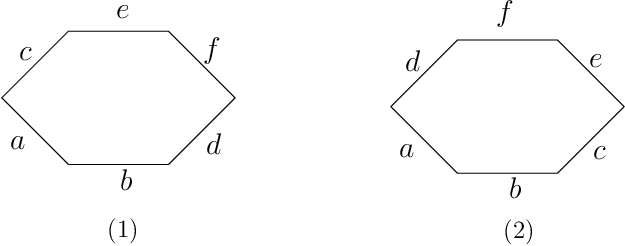}}
\end{center}
\setlength{\abovecaptionskip}{-0.15cm}
\caption{$E_6$ basic cycles}\label{6circle}
\end{figure}

Now we introduce the necessary settings and concepts to proceed with solving the problem. First, we recall group $A_{t,n}$ and then Coxeter group \( C(T) \).

\begin{Def}
{\bf Group $A_{t,n}$}. Let $t \geq 0, n \geq 1$. Fix a set $I$ of size $t$.

Group $A_{t,n}$ is generated by the $n^2|I|$ elements: $$x_{ij}~~(x \in X, i,j=1,2,\cdots,n)$$
with the defining relations

\begin{equation}
\begin{aligned}
&x_{ii}=1;\\
&x_{ij}~x_{jk}=x_{ik};\\
&x_{jk}~x_{ij}x_{ik} =1.
\end{aligned}
\end{equation}

and if $i, j, k, l$ are distinct,

\begin{equation}\label{68}
[x_{ij}, ~y_{kl}]=1.
\end{equation}
\end{Def}

For any $x,~y \in I$ ($x$ and $y$ can be equal). We will sometimes use $A_{t,n}$ to specify the set
of generators of $A$, this is the same group as $A_{t,n}$ for $t=|I|$.

The Coxeter group \( C(T) \) is associated with a graph \( T \) containing \( n \) vertices. This group is generated by the edges of \( T \), subject to the following relations:
\begin{itemize}
  \item \( u^2 = 1 \) for all edges \( u \in T \);
  \item \( u~v = v~u \) if \( u \) and \( v \) are disjoint edges;
  \item \( u~v~u = v~u~v \) if \( u \) and \( v \) intersect.
\end{itemize}

If the graph additionally contains triangular intersections as illustrated in Figure~\ref{3edge}, we define the quotient group \( C_Y(T) \) by imposing the following extra relation:
\begin{equation}\label{forkrel}
     [u, v~w~v]=1, \ \ u, v, w \in T
 \end{equation}
which meet in vertex $a$ (see Figure \ref{3edge}).
\begin{figure}[ht]
\begin{center}
\scalebox{0.65}{\includegraphics{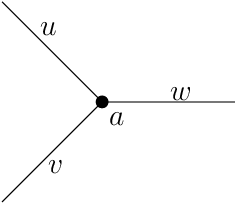}}
\end{center}\caption{Three edges meet in a vertex}\label{3edge}
\end{figure}

To better explore the connection between group $G^{m,n}_1$ and the quotient group $C_Y(T)$, we introduce the following additional relations, as also considered in \cite{RTV}.

Let $u_1, \ldots, u_r$ denote the edges of an arbitrary cycle (where $r$ is the length of the cycle). Each edge is relabeled as $u_i = (i-1, i)$ for $i=2, \ldots, r$, with $u_1 = (1, r)$; this labeling follows the convention illustrated in Figure~\ref{one}.

\begin{figure}[H]
\begin{center}
\scalebox{0.6}{\includegraphics{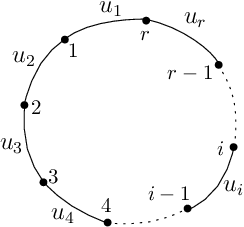}}
\end{center}\caption{Edge labeling convention for cycles}\label{one}
\end{figure}

Let $$\gamma_i = u_{i+2} \cdots u_r u_1 \cdots u_i, \text{ for } i=1, \ldots, r-2$$
and set $\gamma_{r-1} = u_1 \cdots u_{r-1}$ and $\gamma_r = u_2 \cdots u_r$. One can easily prove that
the equality $$\gamma_{r-1} = \gamma_{r}$$ holds in $S_n$.

We next outline the main idea and key steps of the proof. 

Taking the equality $\gamma_{r-1} = \gamma_r$ in $S_n$ as an additional relation within $C_Y(T)$, it can be viewed as imposing a constraint among generators for each $E_6$ basic cycle. Quotienting $C_Y(T)$ by the normal subgroup generated by all such $E_6$ basic cycle relations yields the isomorphism
\begin{equation}\label{formula}
 G^{m,n}_1 \ \cong \ C_Y(T)/\langle {E_6 \ \mbox{basic cycle relations}}\rangle,
\end{equation}
This isomorphism shows that $G^{m,n}_1$ retains the structure of $C_Y(T)$ while “flattening” all $E_6$ basic cycles. The elements of the form $\gamma_j\gamma_1^{-1}$ that are associated with non-$E_6$ basic cycles remain nontrivial in the quotient. Because $C_Y(T)$ is a quotient of $\pi_1(X^{m,n}_{gal})$, the lifting map pulls these elements back to $\pi_1(X^{m,n}_{gal})$.  These elements are then shown to be torsion‑free and pairwise commutative, thereby generating a free abelian subgroup $\mathbb{Z}^{m(2n-1)}$ inside $\pi_1(X^{m,n}_{gal})$. 

\section{Calculations of the Galois covers}\label{CFG}

\subsection{The Galois cover of a surface of $(2,4)$-type}\label{2,4}
In this subsection, we obtain results for  group $\pi_1(X^{2,4}_{gal})$. Figure \ref{Fig24} depicts the degeneration of the surfaces $X^{2,4}$.
\begin{figure}[H]
\begin{center}
\scalebox{0.75}{\includegraphics{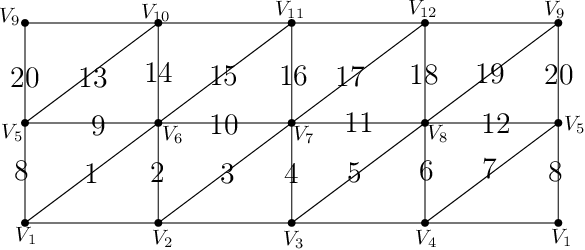}}
\end{center}
\setlength{\abovecaptionskip}{-0.15cm}
\caption{The degeneration of surface $X^{2,4}$}\label{Fig24}
\end{figure}

Using the lemmas established in Section \ref{method}, we compute the fundamental group $\pi_1(X^{2,4}_{gal})$. We obtain the following result:
\begin{thm}\label{24 theorem}
Group $\pi_1(X^{2,4}_{gal})$ contains a subgroup that is isomorphic to $\mathbb{Z}^{14}$.
\end{thm}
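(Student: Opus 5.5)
We follow the strategy outlined at the end of Section~\ref{method}. For $m=2$, $n=4$ we have $x=8$, $y=4$, $z=0$, so $C^{2,4}_0$ consists of $3y+2n=20$ lines and $X^{2,4}_0$ has $2mn=16$ triangles. We first list the singular points of Figure~\ref{Fig24} by type. There are the two-line points $V_2,V_3,V_4$ and $V_1,V_9,\dots,V_{12}$, handled by Lemma~\ref{2pt-lemma}. There are the six-line points $V_6,V_7,V_8$ of type Figure~\ref{Fi2}-(1) and $V_5$ of type Figure~\ref{Fi2}-(2), handled by Lemmas~\ref{6pt-1-result} and~\ref{6pt-2-result}. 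The remaining relations are commutation relations of type~(\ref{par}) for pairs of disjoint edges.

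Collecting these relations gives a presentation of $G^{2,4}_1$. By Lemma~\ref{ii} and \eqref{444}, every local cycle around a six-line point satisfies its $E_6$ basic cycle relation. Comparing generators and relations, we then verify the isomorphism \eqref{formula}:
\[
G^{2,4}_1\cong C_Y(T)/\langle E_6 \mbox{ basic cycle relations}\rangle ,
\]
where $T$ is the dual graph of Figure~\ref{CPmncombine} for $m=2$, $n=4$. This graph has $16$ vertices and $20$ edges.

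Next we pass to the kernel of $C_Y(T)\to S_{16}$, as in \cite{RTV}. By that theory, the kernel is generated by the elements $\gamma_j\gamma_1^{-1}$ attached to the cycles of $T$. These are the images of the corresponding elements of $\pi_1(X^{2,4}_{gal})$ under the surjection $\pi_1(X^{2,4}_{gal})\to\ker(G^{2,4}_1\to S_{16})$. The plan is to choose a basis of the cycle space of $T$. The cyclomatic number is $20-16+1=5$, but the $C_Y$ structure attaches several independent $\gamma_j\gamma_1^{-1}$ to each cycle. We then quotient out the $4$ basic $E_6$ cycles, which become trivial. The surviving generators are those attached to the cycles that wrap around the torus direction and those coming from the non-$E_6$ faces.

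We then count the nontrivial surviving elements $\gamma_j\gamma_1^{-1}$, aiming for exactly $m(2n-1)=14$. Using the structure of $A_{t,n}$, where by \cite{RTV} the kernel of $C_Y$ over a graph with cycles is described through the groups $A_{t,n}$, we show three things:
\begin{itemize}
\item the $14$ elements pairwise commute, using relation~\eqref{68} together with the commutation relations~\eqref{eq.2} and~\eqref{eq.5};
\item they have infinite order;
\item they are independent, so they generate $\mathbb{Z}^{14}$ in the quotient.
\end{itemize}
To prove independence, we would construct an explicit homomorphism from the quotient (or from its kernel subgroup) onto $\mathbb{Z}^{14}$. For this we assign to each edge of $T$ a translation in $\mathbb{Z}^{14}\rtimes S_{16}$, in affine-Weyl style, compatible with all Coxeter, fork~\eqref{forkrel} and $E_6$ relations. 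Because a subgroup of a quotient that surjects onto $\mathbb{Z}^{14}$ lifts, choosing preimages in $\pi_1(X^{2,4}_{gal})$ gives a $\mathbb{Z}^{14}$ there. Here we note that a free abelian image splits, and pairwise commuting lifts exist because the commutators lie in a part of the kernel that we can avoid by lifting within an abelian subgroup.

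The main obstacle is twofold: verifying the isomorphism \eqref{formula} in detail, and showing non-triviality and independence of the $14$ surviving elements. I would attack the second by an explicit representation, checking each local relation at $V_5,\dots,V_8$ and at the two-line points by hand.
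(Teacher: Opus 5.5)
\textbf{The gap: lifting from $G^{2,4}_1$ to $\pi_1(X^{2,4}_{gal})$.} Your representation can at best give a surjection $\pi_1(X^{2,4}_{gal})\to K_1(T)\to\mathbb{Z}^{14}$. The claim that ``a free abelian image splits'' is false for non-abelian groups once the rank is at least $2$. For example, $F_{14}\to\mathbb{Z}^{14}$ has no section, since $F_{14}$ contains no $\mathbb{Z}^2$. Likewise, in the integral Heisenberg group no lifts of a basis of its abelianization $\mathbb{Z}^2$ commute.

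Arbitrary preimages $g_1,\dots,g_{14}$ therefore generate a group that merely surjects onto $\mathbb{Z}^{14}$. ``Lifting within an abelian subgroup'' presupposes the very subgroup you are trying to build. A surjection onto $\mathbb{Z}^{14}$ only gives $\operatorname{rank}H_1(X^{2,4}_{gal},\mathbb{Z})\ge 14$, which is a homology statement, not the theorem.

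Nor can you identify $\pi_1(X^{2,4}_{gal})$ with $K_1(T)$, because $\theta$ is not injective in general. For $m=1$ the dual graph is a single $2n$-cycle, so $K_1(T)$ is a quotient of the translation lattice $\mathbb{Z}^{2n-1}$ of $W(\tilde A_{2n-1})$. Yet $\pi_1(X^{1,n}_{gal})\cong\mathbb{Z}^{4n-2}$, as recalled in Section~\ref{bgzd}. What is needed is to fix explicit preimages and prove that they commute in $G^{2,4}$ itself, using relations valid before $j=j'$ is imposed.

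\textbf{Secondary issues and comparison with the paper.} You never pin down the $14$ elements; ``aiming for exactly $14$'' is not a count. In the paper they are $\gamma_i\gamma_1^{-1}$ and $\beta_j\beta_1^{-1}$, $2\le i,j\le 8$, attached to the two boundary $8$-cycles $H^{2,4}_1$ (edges $1,\dots,8$) and $H^{2,4}_2$ (edges $13,\dots,20$). There are no other ``non-$E_6$ faces'', and $14=2\cdot 7$.

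These two cycles are vertex-disjoint in $T$, which is what makes commutation across cycles available. Commutation inside one cycle does not follow from \eqref{68}. The paper obtains it by reducing $\gamma_7\gamma_1^{-1}\gamma_8=\gamma_8\gamma_1^{-1}\gamma_7$ to $\langle 7,8\rangle=e$, using the triple and commutation relations along the cycle. It then takes the lifts to be the same words. That reduction uses relations such as $\langle i,j\rangle=e$, which Lemma~\ref{2pt-lemma} provides only in $G^{2,4}_1$, so the lift needs an argument there too.

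Your representation idea is a cleaner route than the paper's appeal to $A_{t,n}$ for getting infinite order and independence in $G^{2,4}_1$. However, the target should be $\mathbb{Z}^{15}\rtimes S_{16}=W(\tilde A_{15})$; there is no natural reflection action of $S_{16}$ on $\mathbb{Z}^{14}$. Send an edge $(a,b)$ of $T$ to the reflection in the hyperplane $v_a-v_b=k_{ab}$, where $k$ is an integer $1$-cochain whose holonomy (signed sum of $k$) is zero around the four hexagons and $\pm1$ around $H^{2,4}_1$. The holonomy is then also $\pm1$ around $H^{2,4}_2$, since $H^{2,4}_1-H^{2,4}_2$ is the signed sum of the hexagons.

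With this choice the Coxeter, fork and $E_6$ relations hold. The $14$ elements go to bases of the root lattices of the two blocks of $8$ triangles. Together with their commutation in $G^{2,4}_1$, they span a $\mathbb{Z}^{14}$ inside $K_1(T)$. That settles the statement one level down; the lift to $\pi_1(X^{2,4}_{gal})$ is the missing piece.
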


\begin{proof}
A great deal of information can be obtained directly from Figure \ref{Fig24}. Here, each vertex $V_i$ ($i=5,6,7,8$) is the intersection of six lines, while the remaining vertices are each the intersection of two lines.

Let $C^{2,4}$ be the branch curve of surface $X^{2,4}$. Fundamental group $\pi_1(\mathbb{CP}^2 - C^{2,4})$ of its complement in $\mathbb{CP}^2$ is generated by elements ${\{j,j'\}}^{20}_{j=1}$. Its presentation, derived from the van Kampen Theorem \cite{vk}, leads us to consider the quotient group $G^{2,4}_1$.

Relations in this group arise from the vertices $V_i$ ($i=6,7,8$). These relations can be obtained by applying Lemma \ref{6pt-1-result}:
\begin{eqnarray}\label{24.222}
& {\underline{Vertex~V_6}} \ \ \langle 1,2\rangle=\langle 2,10\rangle=\langle 10, 15\rangle=\langle 15, 14\rangle=\langle 14,9 \rangle=\langle 9,1 \rangle=e,\\
&[1,10]=[1,14]=[1,15]=[2,9]=[2,14]=[2,15]=[10,14]=[10,9]=[9,15]=e,
\end{eqnarray}
\begin{equation}\label{24.1}
 9\ 1\ 2\ 1\ 9 = 10\ 15\ 14\ 15\ 10. 
\end{equation}
\begin{eqnarray}
&{\underline{Vertex~V_7}}\ \ \langle 3,4\rangle=\langle 4,11\rangle=\langle 11, 17\rangle=\langle 17, 16\rangle=\langle 16,10 \rangle=\langle 10,3 \rangle=e,\\
&[3,11]=[3,16]=[3,17]=[4,10]=[4,16]=[4,17]=[11,16]=[11,10]=[10,17]=e,
\end{eqnarray}
\begin{equation}\label{24.2}
10\ 3\ 4\ 3\ 10 = 11\ 17\ 16\ 17\ 11.
\end{equation}
\begin{eqnarray}
&{\underline{Vertex~V_8}} \ \ \langle 5,6\rangle=\langle 6,12\rangle=\langle 12, 19\rangle=\langle 19, 18\rangle=\langle 18,11 \rangle=\langle 11,5\rangle=e,\\
&[5,12]=[5,18]=[5,19]=[6,11]=[6,18]=[6,19]=[12,18]=[12,11]=[11,19]=e,
\end{eqnarray}
\begin{equation}\label{24.4}
 11\ 5\ 6\ 5\ 11 = 12\ 19\ 18\ 19\ 12. 
\end{equation}

The relations in $G^{2,4}_1$ arising from  vertex $V_5$ are given by Lemma \ref{6pt-2-result}:
\begin{eqnarray}
& {\underline{Vertex~V_5}} \ \ \langle 7,8\rangle=\langle 8,9\rangle=\langle 9,13\rangle=\langle 13, 20\rangle=\langle 20, 12\rangle=\langle 12, 7\rangle=e,\\
&[7, 9]=[7, 13]=[7, 20]=[8,12]=[8,13]=[8,20]=[9,20]=[9,12]=[12,13]=e,
\end{eqnarray}
\begin{equation}\label{24.3}
7\ 8\ 9\ 8\ 7= 12\ 20\ 13\ 20\ 12.
\end{equation}

Each of the other eight vertices is an intersection of two lines and yields a corresponding relation of type (\ref{vK3}).
\begin{equation} \langle 1,8\rangle=\langle 1,2\rangle=\langle 2, 3\rangle=\langle 3,4\rangle=\langle 4,5\rangle=\langle 5,6\rangle=\langle 6,7\rangle=\langle 7,8 \rangle=e,
\end{equation}
\begin{equation}\label{66}
\langle 13,20\rangle=\langle 13,14\rangle=\langle 14, 15\rangle=\langle 15,16\rangle=\langle 16,17\rangle=\langle 17,18\rangle=\langle 18,19\rangle=\langle 19,20 \rangle=e.
\end{equation}

Additional commutation relations of the form (\ref{par}) exist for lines constituting $C^{2,4}$ that intersect; for example, we have $[1, 3] = e$. The projective relation (\ref{vK4}) also holds in $G^{2,4}_1$, but it is redundant.

Dual graph $T$ consists of 20 lines, numbered $1, 2, \dots, 20$, which serve as generators for the group $C(T)$.
So far, we have identified all relations of types (\ref{vK2}) and (\ref{vK3}) that hold in $C(T)$.

From relation (\ref{24.3}) and the relation $[1,12] = e$, we derive $[1, 8~9~8] = e$. Similarly, from (\ref{24.1}) and $[13,10] = e$, we obtain $[13, 14~9~14] = e$.
By iterating this computational process throughout the graph, we obtain all relations of type (\ref{forkrel}). Together with the relations of types (\ref{vK2}) and (\ref{vK3}), this yields the complete presentation of the group $C_Y(T)$.

Furthermore, the presentation of $G^{2,4}_1$ includes the $E_6$ basic cycle relations, which are (\ref{24.1}), (\ref{24.2}), (\ref{24.4}), and (\ref{24.3}). Figure \ref{24dualgraph} depicts the dual graph of the degeneration $X_0^{2,4}$ and these $E_6$ basic cycles around $V_5, V_6, V_7,$ and $V_8$.
\begin{figure}[H]
\begin{center}
\scalebox{0.75}{\includegraphics{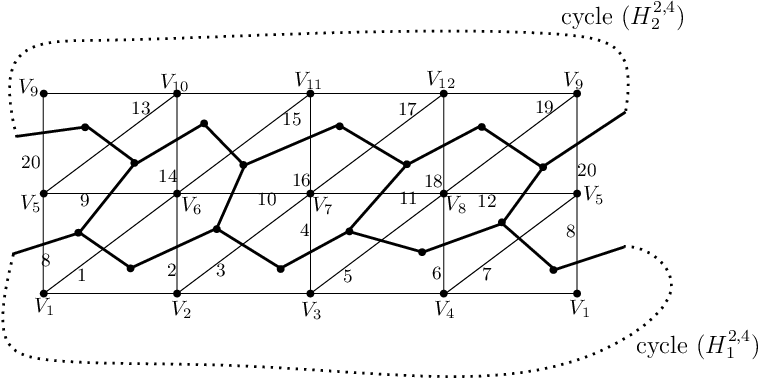}}
\end{center}
\setlength{\abovecaptionskip}{-0.15cm}
\caption{Dual graph of the degeneration $X_0^{2,4}$}\label{24dualgraph}
\end{figure}

The calculations above show that the defining relations for $G^{2,4}_1$ are precisely those of $C_Y(T)$, with the addition of the $E_6$ basic cycle relations. Therefore, the isomorphism formula in (\ref{formula}) holds here for $m=2, n=4$.

Furthermore, an analysis of the relations in $G^{2,4}_1$ confirms that while it includes all $E_6$ basic cycle relations, it does not contain relations corresponding to the two types of cycles shown in Figure \ref{24dualgraph}:
\begin{itemize}
    \item The cycle $(H^{2,4}_1)$ generated by generators $1, 2, 3, 4, 5, 6, 7, 8$ in the dual graph;
    \item The cycle $(H^{2,4}_2)$ generated by generators $13, 14, 15, 16, 17, 18, 19, 20$ in the dual graph.
\end{itemize}

According to preliminary knowledge, we have the following exact sequence:\\
$$
\begin{tikzcd}
0 \arrow[r] 
  & \pi_1(X^{2,4}_{gal}) \arrow[r] \arrow[d, "\theta"] 
  & G^{2,4} \arrow[r] \arrow[d] 
  & S_{16} \arrow[r] \arrow[d, equal] 
  & 0 \\
0 \arrow[r] 
  & K_1(T) \arrow[r] 
  & G^{2,4}_1 \arrow[r] 
  & S_{16} \arrow[r] 
  & 0
\end{tikzcd}
$$

According to Proposition 4.1 in \cite{RTV}, kernel $K_1(T)$ in the second exact sequence is generated as a subgroup by the elements $\gamma_j \gamma^{-1}_i$. Our analysis confirms that the elements corresponding to the two cycle types $(H^{2,4}_1)$ and $(H^{2,4}_2)$ are contained in $K_1(T)$.

We now define the specific elements for each cycle. For cycle $(H^{2,4}_1)$, we set:
\begin{align*}
\gamma_i &= (i+2) \cdots 8 \cdot 1 \cdots i \quad \text{for } i = 1, \ldots, 6,  &\gamma_7 = 1 \cdots 7, \
\gamma_8 &= 2 \cdots 8.
\end{align*}
Similarly, for cycle $(H^{2,4}_2)$, we define:
\begin{align*}
\beta_j &= (j+14) \cdots 20 \cdot 13 \cdots (j+12) \quad \text{for } j = 1, \ldots, 6,  &
\beta_7 = 13 \cdots 19, \
\beta_8 &= 14 \cdots 20.
\end{align*}

Thus, based on the above-mentioned proposition, we find that $K_1(T)$ contains the following 14 elements:
$$\gamma_{i}\gamma_{1}^{-1}, \beta_{j}\beta_{1}^{-1}, \text{ for } i=2, \ldots, 8; \ j=2, \ldots, 8.$$

Now, we prove that the above 14 elements commute with each other. Clearly, the 14 elements listed above are distinct，and not equal to the identity. Since map $\theta$ is surjective, we conclude that $\pi_1(X^{2,4}_{gal})$ also contains the inverse images of these 14 elements, which are indeed the 14 elements themselves. Using Lemmas \ref{6pt-1-result}, \ref{6pt-2-result}, and \ref{ii}, we show that the above 14 elements commute  with each other.

We take cycle $(H^{2,4}_1)$ as an example to illustrate this. Given $\gamma_1 = 3\cdot4\cdot5\cdot6\cdot7\cdot8\cdot1$, $\gamma_7 = 1\cdot2\cdot3\cdot4\cdot5\cdot6\cdot7$, and $\gamma_8 = 2\cdot3\cdot4\cdot5\cdot6\cdot7\cdot8$, we write the commutativity of $\gamma_7\gamma^{-1}_1$ and $\gamma_8\gamma^{-1}_1$ as follows
$$\gamma_7\gamma^{-1}_1\gamma_8 \gamma^{-1}_1= \gamma_8\gamma^{-1}_1\gamma_7\gamma^{-1}_1.$$
This equality becomes
$$\gamma_7\gamma^{-1}_1\gamma_8 = \gamma_8\gamma^{-1}_1\gamma_7.$$
Now we substitute the defined expressions for $\gamma^{-1}_1$, $\gamma_7$, and $\gamma_8$ into each side of the equality, and get
$$\gamma_7\gamma^{-1}_1\gamma_8 =( 1\cdot2\cdot3\cdot4\cdot5\cdot6\cdot7)(1\cdot8\cdot7\cdot6\cdot5\cdot4\cdot3)(2\cdot3\cdot4\cdot5\cdot6\cdot7\cdot8)$$ and $$\gamma_8\gamma^{-1}_1\gamma_7=(2\cdot3\cdot4\cdot5\cdot6\cdot7\cdot8)(1\cdot8\cdot7\cdot6\cdot5\cdot4\cdot3)(1\cdot2\cdot3\cdot4\cdot5\cdot6\cdot7).$$

We simplify both sides, using $\langle i, i+1 \rangle = e$ for $i=1,2,\dots,7$, $\langle 1, 8 \rangle = e$, and $[i, j] = e$ for $j \neq i, i\pm 1$ (with $i,j\in{1,2,\dots,8}$). The equality reduces to the relation $\langle 7, 8 \rangle = e$, which is already among the given relations. Hence, the equality holds.

Continuing the proof, we next show that none of the above 14 elements has finite order. When a finite-order element exists among the 14, the commutative group generated by them will have an extra relation. Because all of these elements are induced by the cycles $(H^{2,4}_1)$ and $(H^{2,4}_2)$ in graph $T$ and group $G^{2,4}_1$ is the quotient group of $A_{6,16}$, thus the above additional relation between elements induces an additional relation within $A_{6,16}$. However, $A_{6,16}$ admits no relations beyond the commutation relations. This contradiction completes the proof.



\begin{rmk}
The 14 elements we selected can be used to construct an abelian group by the method in this section. If other circles are chosen, however, the group constructed by a similar method is not necessarily abelian.
\end{rmk}

A detailed proof of the general case is given in Section~\ref{sec-mn}. We conclude that these elements generate a subgroup isomorphic to $\mathbb{Z}^{14}$, which completes the proof of the theorem.
\end{proof}

\subsection{The Galois covers of the surfaces of $(2,n)$-type and $(3,n)$-type}\label{sec-2n3n}

In this subsection, we study the degeneration of surfaces $X^{2,n}$ and $X^{3,n}$ for any $n$. See Figures~\ref{Fig2n} and~\ref{Fig3n}, respectively. In both degenerations, we have the same types of vertices (i.e., vertices that are intersections of either two lines or six lines) and the same rule for numbering the vertices and edges. This common structure enables us to deal with the general case.

To ensure clarity and avoid repetition, we present a brief proof for these two special cases. This illustrates how the general result arises naturally from specific examples. For further details, see Section~\ref{2,4} and the full proof of the general case.

We now present the conclusions from the degenerations of surfaces $X^{2,n}$ and $X^{3,n}$.

\begin{thm}\label{2n theorem}
$\pi_1(X^{2,n}_{gal})$ contains a subgroup that is isomorphic to $\mathbb{Z}^{4n-2}$.
\end{thm}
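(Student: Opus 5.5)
We follow the template of the proof of Theorem \ref{24 theorem} for $X^{2,4}$, replacing $4$ by a general $n$. For $m=2$ we have $x=2n$, $y=n$, $z=0$, so the branch curve $C_0^{2,n}$ consists of $3y+2n=5n$ lines and the dual graph $T$ has $5n$ edges and $4n$ vertices (one for each triangle). In Figure \ref{Fig2n} the vertices $V_{n+2},\dots,V_{2n}$ are six-line points of the type in Figure \ref{Fi2}-(1), $V_{n+1}$ is the single six-line point of the type in Figure \ref{Fi2}-(2), and the remaining vertices $V_1,\dots,V_n,V_{2n+1},\dots,V_{3n}$ are two-line points.

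The first step is to write down the presentation of $G^{2,n}_1$. Lemma \ref{2pt-lemma} gives the triple relations along the bottom cycle (edges $1,\dots,2n$) and the top cycle (edges $3n+1,\dots,5n$). Lemmas \ref{6pt-1-result} and \ref{6pt-2-result} give the relations at the $n$ six-line points, and the extra type (\ref{par}) commutators are added. As in the case $X^{2,4}$, combining each relation \eqref{eq.3} or \eqref{eq.6} with the commutation relations produces every fork relation (\ref{forkrel}). Together with Lemma \ref{ii} and \eqref{444}, this shows that the $E_6$ basic cycle relations are exactly the remaining ones. Hence the isomorphism (\ref{formula}) holds for $m=2$ and all $n$.

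Next we use the two long cycles of $T$ that are not $E_6$ basic cycles. The first is $H^{2,n}_1$, with edges $1,\dots,2n$ and length $r=2n$. The second is $H^{2,n}_2$, with edges $3n+1,\dots,5n$ and also length $2n$. For $H^{2,n}_1$ we define $\gamma_1,\dots,\gamma_{2n}$ as in Section \ref{method}, and for $H^{2,n}_2$ we define $\beta_1,\dots,\beta_{2n}$ in the same way. By Proposition 4.1 of \cite{RTV}, the $2(2n-1)=4n-2$ elements
\[
\gamma_i\gamma_1^{-1},\quad \beta_j\beta_1^{-1}\qquad (2\le i,j\le 2n)
\]
lie in $K_1(T)$. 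Since $\theta:\pi_1(X^{2,n}_{gal})\to K_1(T)$ is surjective, we choose lifts of these elements in $\pi_1(X^{2,n}_{gal})$. The two families commute with each other, because every edge of $H^{2,n}_1$ is disjoint from every edge of $H^{2,n}_2$ and the type (\ref{par}) and (\ref{vK2}) relations then apply. Within one cycle, we show commutativity as in the computation of $\gamma_7\gamma_1^{-1}\gamma_8=\gamma_8\gamma_1^{-1}\gamma_7$. We expand the products, cancel using $\langle i,i+1\rangle=e$, $\langle 1,2n\rangle=e$, and $[i,j]=e$ for non-adjacent edges, and reduce to a relation already present. This can be organized as an induction on the difference of indices, so it is uniform in $n$.

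The main obstacle is proving that the subgroup generated by these $4n-2$ elements is free abelian of full rank, that is, torsion-free with no extra relation among the generators. A second difficulty is that the commutation computed in the quotient group must actually hold for the lifts in $\pi_1(X^{2,n}_{gal})$. For the rank statement, I would first argue as in the case $X^{2,4}$. Any nontrivial relation $\prod(\gamma_i\gamma_1^{-1})^{a_i}(\beta_j\beta_1^{-1})^{b_j}=e$ passes to $G^{2,n}_1$, which is a quotient of $A_{t,4n}$. There, by \cite{RTV}, the cycle elements of non-$E_6$ cycles map to independent generators subject only to commutation relations. This would force all $a_i$ and $b_j$ to vanish. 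If that route proves too delicate, a concrete fallback is to map onto an explicit abelian quotient, namely $H_1$ of the Galois cover restricted to the two cylinder boundary loops. In that quotient the $\gamma_i\gamma_1^{-1}$ give $2n-1$ linearly independent translation vectors, and likewise for the $\beta_j\beta_1^{-1}$.
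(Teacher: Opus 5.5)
\textbf{Overall.} Your outline follows the paper's route: the isomorphism (\ref{formula}), the two boundary cycles, Proposition 4.1 of \cite{RTV}, lifting through $\theta$, word computations for commutation, and $A_{t,n}$ for independence. But the step you flag as the ``second difficulty'' is a genuine gap, and neither of your routes closes it.

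\textbf{The lifting gap.} Every relation your commutation argument uses is a relation of $G^{2,n}_1$: $i^2=e$, $\langle i,i+1\rangle=e$, $\langle 1,2n\rangle=e$, $[i,j]=e$, and Lemmas \ref{6pt-1-result}--\ref{ii}. Several of these, notably the simplified six-line relations, are only available after imposing $j=j'$. So what you actually prove is weaker. For lifts $\tilde g,\tilde h\in\pi_1(X^{2,n}_{gal})$, you only get that $[\tilde g,\tilde h]$ lies in $\ker\theta$, the normal closure of the $j^{-1}j'$ in $G^{2,n}$, and nothing forces it to be trivial.

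Surjectivity of $\theta$ gives preimages but no control over the relations among them. A group mapping onto a group that contains $\mathbb{Z}^{4n-2}$ need not contain even $\mathbb{Z}^2$; for example, a free group of rank $4n-2$ maps onto $\mathbb{Z}^{4n-2}$. An abelian quotient, as in your fallback, can certify independence of elements already known to commute. It can never show that they commute.

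The paper passes over exactly this point: it declares the preimages to be ``the elements themselves'' and then computes in $G^{2,n}_1$. So following the paper does not help. To get a subgroup of $\pi_1(X^{2,n}_{gal})$ you would have to work in $G^{2,n}$ itself, with $j\neq j'$ and the unsimplified relations of \cite{ATXY} at the six-line points. There you would need to show that specific representing words commute, or else prove something structural, such as centrality of $\ker\theta$ in $\pi_1(X^{2,n}_{gal})$.

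\textbf{The independence argument.} Your first independence route is also invalid as stated, and the paper's torsion argument has the same defect. The simple presentation of $A_{t,4n}$ says nothing about which new relations appear in its quotient $K_1(T)=K(T)/(B\cap K(T))$. Note also that $G^{2,n}_1$ is a quotient of $A_{t,4n}\rtimes S_{4n}$, not of $A_{t,4n}$.

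Moreover, the two families do not come from independent letters. Here $t=5n-4n+1=n+1$. The $n$ hexagons around $V_{n+1},\dots,V_{2n}$ tile the band between $H^{2,n}_1$ and $H^{2,n}_2$. Hence $H^{2,n}_2$ is homologous to $H^{2,n}_1$ modulo the hexagons, and after the $E_6$ relations both families live over a single letter.

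What can work in $K_1(T)$ is the following. Take the hexagons together with $H^{2,n}_1$ as a cycle basis. If the $E_6$ relations kill exactly the hexagon letters, then $K_1(T)\cong A_{1,4n}\cong\mathbb{Z}^{4n-1}$. This uses the fact that for an $N$-cycle $T$ one has $C_Y(T)=C(T)\cong\mathbb{Z}^{N-1}\rtimes S_N$, the affine Weyl group $\tilde A_{N-1}$. In that group the $\gamma_i\gamma_1^{-1}$ and $\beta_j\beta_1^{-1}$ become index differences supported on the disjoint vertex sets of the bottom and top rows of triangles. They therefore have rank $4n-2$.

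By the same reasoning, commutation inside one cycle is automatic in $G^{2,n}_1$. The edges of $H^{2,n}_1$ generate a quotient of $\tilde A_{2n-1}$, and the $\gamma_i\gamma_1^{-1}$ lie in the image of its abelian translation lattice. This settles the statement for $K_1(T)$ only; it does not repair the lifting gap above.
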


\begin{figure}[H]
\begin{center}
\scalebox{0.75}{\includegraphics{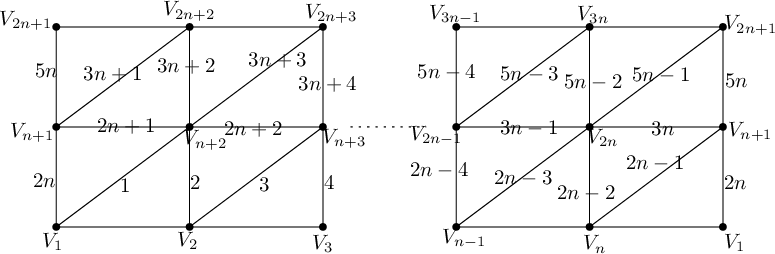}}
\end{center}
\setlength{\abovecaptionskip}{-0.15cm}
\caption{The degeneration of the surfaces  $X^{2,n}$}\label{Fig2n}
\end{figure}

\begin{proof}
Let $C^{2,n}$ denote the branch curve of surface $X^{2,n}$. The fundamental group $\pi_1(\mathbb{CP}^2 - C^{2,n})$ has a presentation with generators $\{j, j'\}_{j=1}^{5n}$; we consider this group $G_1^{2,n}$.

We take the elements $1, 2, \ldots, 5n$ as the generators. The isomorphism formula in (\ref{formula}) holds here for $m=2$, and $n$ is general number.

The dual graph in Figure~\ref{Fig2n} contains cycles of two types that are not incorporated into the group $G^{2,n}_1$. These cycles are:
\begin{itemize}
\item the cycle generated by the generators $1, 2, \dots, 2n$;
\item the cycle generated by the generators $3n+1, 3n+2, \dots, 5n$.
\end{itemize}
The techniques developed in Section~\ref{2,4} can be applied to these cycles as well.

Group $\pi_1(X^{2,n}_{gal})$ contains the following elements:
 $$\gamma_{i}{\gamma_{1}}^{-1},
\beta_{j}{\beta_{1}}^{-1}, \ \mbox{for}~{i=2, \dots, 2n; \ j=2, \dots, 2n}.$$

We prove that these $4n-2$ elements all have infinite order and pairwise commute. Consequently, they generate a subgroup isomorphic to $\mathbb{Z}^{4n-2}$, which completes the proof of the theorem.
\end{proof}

\begin{thm}\label{3n theorem}
$\pi_1(X^{3,n}_{gal})$ contains a subgroup that is isomorphic to ~$\mathbb{Z}^{6n-3}$.
\end{thm}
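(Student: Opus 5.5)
We follow the scheme of Theorems~\ref{24 theorem} and~\ref{2n theorem}. The degeneration $X_0^{3,n}$ has $3y+2n=8n$ edges, because $y=2n$. The branch curve $C^{3,n}$ has $8n$ components, and $\pi_1(\mathbb{CP}^2-C^{3,n})$ is generated by $\{j,j'\}_{j=1}^{8n}$. We pass to the quotient $G^{3,n}_1$.

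\textbf{Step 1: the presentation of $G^{3,n}_1$.} The $2n$ vertices that are intersections of six lines are of two kinds. Those of the type in Figure~\ref{Fi2}-(1), namely $V_{n+2},\dots,V_{2n}$ and $V_{y+2},\dots,V_{y+n}$, contribute the relations of Lemma~\ref{6pt-1-result}. The two vertices $V_{n+1}$ and $V_{y+1}$, of the type in Figure~\ref{Fi2}-(2), contribute those of Lemma~\ref{6pt-2-result}. The two-line vertices $V_1,\dots,V_n$ and $V_{x+1},\dots,V_{x+n}$ each give a single triple relation by Lemma~\ref{2pt-lemma}. The type~(\ref{par}) commutations between disjoint edges are added, and the projective relation is discarded as redundant.

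\textbf{Step 2: identify $G^{3,n}_1$ with a quotient of $C_Y(T)$.} Exactly as for $X^{2,4}$, we combine each relation \eqref{eq.3} or \eqref{eq.6} with the appropriate commutations to produce all fork relations~(\ref{forkrel}). Together with Lemma~\ref{ii} and \eqref{444}, this gives the isomorphism~(\ref{formula}):
\[ G^{3,n}_1\cong C_Y(T)/\langle E_6 \mbox{ basic cycle relations}\rangle. \]

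\textbf{Step 3: locate the non-$E_6$ cycles.} Now we go through the dual graph $T$. The cycles that are not flattened are the horizontal loops coming from the torus direction, one for each of the $m=3$ rows of squares. The bottom loop is formed by edges $1,\dots,2n$. The middle loop is formed by the $2n$ diagonal edges of the middle row, in the range $n+\cdot$ up to $y+\cdot$. The top loop is formed by edges $6n+1,\dots,8n$. Each loop has length $r=2n$. For each of them we form $\gamma_i$, $\gamma_{r-1}$ and $\gamma_r$ as in the cycle convention of Figure~\ref{one}, and we take the elements $\gamma_i\gamma_1^{-1}$ for $i=2,\dots,2n$. This gives $2n-1$ elements per loop, hence $3(2n-1)=6n-3$ elements in total. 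By Proposition~4.1 of~\cite{RTV} they lie in $K_1(T)$, and surjectivity of $\theta$ lifts them to $\pi_1(X^{3,n}_{gal})$.

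\textbf{Step 4: commutation.} Within a single loop, pairwise commutation reduces, as in the $(2,4)$ computation, to the triple relations $\langle i,i+1\rangle=e$ and $\langle 1,2n\rangle=e$ and to commutation of non-adjacent edges. Elements coming from two different loops are words in edge sets that meet only through $E_6$ vertices. Here I would use \eqref{44} and \eqref{444} to move letters across and reduce each commutator to relations that are already present.

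\textbf{Step 5: infinite order and independence.} We argue via the map to $A_{6,6n}$ (there are $2mn=6n$ triangles), as in the $(2,4)$ case: any extra relation among the elements would induce a non-commutation relation in $A_{t,6n}$, which is impossible.

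\textbf{Main obstacle.} The hard step is the middle loop. It is not a boundary loop, so its edges are adjacent to $E_6$ vertices on both sides. Checking that its $\gamma$'s are well defined and commute with those of the outer loops therefore needs repeated use of the cyclic identities \eqref{44} and \eqref{444}. I would first try to show that, modulo the $E_6$ relations, the middle loop is conjugate to the bottom loop, and then transport the commutation computation from the bottom loop.
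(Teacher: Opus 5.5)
Your outline follows the paper's proof closely. That proof invokes \eqref{formula} for $m=3$ and lists the three non-$E_6$ cycles $(H^{3,n}_1)=\{1,\dots,2n\}$, $(H^{3,n}_2)=\{3n+1,\dots,5n\}$ and $(H^{3,n}_3)=\{6n+1,\dots,8n\}$. It then takes the $6n-3$ elements $\gamma_i\gamma_1^{-1},\beta_j\beta_1^{-1},\zeta_k\zeta_1^{-1}$ ($i,j,k=2,\dots,2n$), and refers to Sections~\ref{2,4} and~\ref{sec-mn} for commutation and infinite order. Your Steps~1--3 and~5 match this, subject to the corrections below.

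\textbf{The middle loop is misidentified.} A row of squares has only $n$ diagonals. The loop is made of the $2n$ within-row edges of the middle row, namely its $n$ diagonals alternating with its $n$ vertical edges; these are the generators $3n+1,\dots,5n$. The edges $2n+1,\dots,3n$ and $5n+1,\dots,6n$ are interior horizontal edges joining triangles of adjacent rows, and they lie on none of the three loops.

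\textbf{Your ``main obstacle'' is not one.} No conjugation to the bottom loop is needed: the $\beta$'s are defined and lie in $K_1(T)$ exactly as the $\gamma$'s do. The within-loop identity $\beta_{2n-1}\beta_1^{-1}\beta_{2n}=\beta_{2n}\beta_1^{-1}\beta_{2n-1}$ uses only two kinds of relations: triple relations between consecutive loop edges (including the closing pair $3n+1$, $5n$), and commutation of non-consecutive loop edges. For the middle row, the first come from \eqref{eq.1}, \eqref{eq.4} and the second from \eqref{eq.2}, \eqref{eq.5} at the six-line vertices $V_{n+1},\dots,V_{3n}$, together with relations of types~(\ref{vK2}) and~(\ref{par}). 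So \eqref{gg1}--\eqref{gg4} go through verbatim with indices shifted by $3n$.

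Commutation across loops is immediate. Edges of different rows join disjoint pairs of triangles, so they are disjoint edges of $T$ and commute. Where they meet at a vertex, this is the relation $[a,e]=[a,f]=[b,e]=[b,f]=e$ in \eqref{eq.2} or \eqref{eq.5}. So \eqref{44} and \eqref{444} play no role there.

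\textbf{A minor fix in Step~5.} $A_{6,6n}$ should be $A_{t,6n}$ with $t=8n-6n+1=2n+1$, the number of independent cycles of $T$; the $6$ comes from $X^{2,4}$.

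\textbf{A caveat shared with the paper.} Steps~4 and~5 are verified in $G^{3,n}_1$, that is, in $K_1(T)=\pi_1(X^{3,n}_{gal})/\ker\theta$. Infinite order passes to the lifts, since their images already have infinite order. Commutation does not: surjectivity of $\theta$ gives lifts, but nothing forces them to commute in $\pi_1(X^{3,n}_{gal})$, where $j\neq j'$. As written, the argument therefore exhibits $\mathbb{Z}^{6n-3}$ inside a quotient of $\pi_1(X^{3,n}_{gal})$. Getting an actual subgroup requires checking the commutators of the lifts in $G^{3,n}$ itself.
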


\begin{figure}[H]
\begin{center}
\scalebox{0.75}{\includegraphics{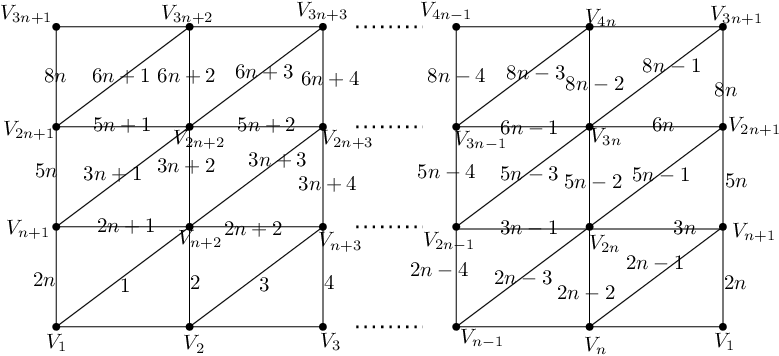}}
\end{center}
\setlength{\abovecaptionskip}{-0.15cm}
\caption{The degeneration of the surfaces $X^{3,n}$}\label{Fig3n}
\end{figure}

\begin{proof} 
Let $C^{3,n}$ denote the branch curve of surface $X^{3,n}$. The fundamental group of the complement $\pi_1(\mathbb{CP}^2 - C^{3,n})$, has a presentation with generators $\{j, j'\}_{j=1}^{8n}$; we denote this group by $G^{3,n}$. We take $1,2,\dots, 8n$ as generators. The isomorphism formula in (\ref{formula}) holds here for $m=3$, and $n$ is general number.

The dual graph (see Figure~\ref{CP3ncombine}) contains cycles of three types that are not incorporated into group $G^{3,n}_1$. These cycles are:
\begin{itemize}
\item The cycle $(H^{3,n}_1)$ generated by generators $1, 2, \dots, 2n$;
\item The cycle $(H^{3,n}_2)$ generated by generators $3n+1, 3n+2, \dots, 5n$;
\item The cycle $(H^{3,n}_3)$ generated by generators $6n+1, 6n+2, \dots, 8n$.
\end{itemize}

\begin{figure}[H]
\begin{center}
\scalebox{0.75}{\includegraphics{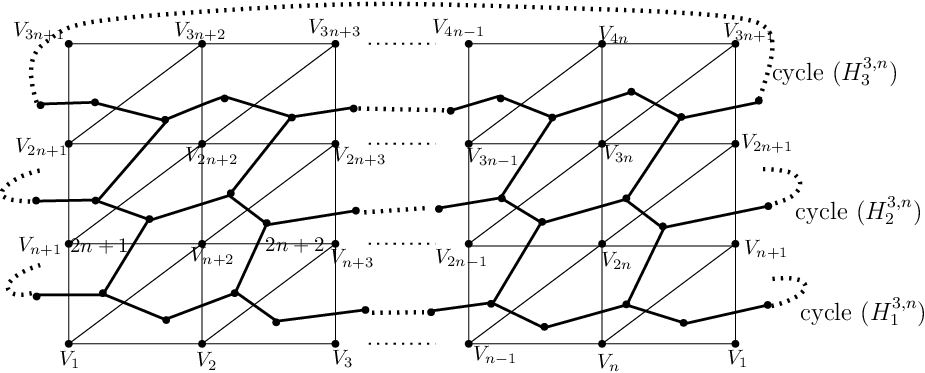}}
\end{center}
\setlength{\abovecaptionskip}{-0.15cm}
\caption{Dual graph in the  surface $X_0^{3,n}$}\label{CP3ncombine}
\end{figure}

 The techniques developed in Section~\ref{2,4} can be applied to these cycles as well.

Group $\pi_1(X^{3,n}_{gal})$ contains the following elements:
 $$\gamma_{i}{\gamma_{1}}^{-1},
\beta_{j}{\beta_{1}}^{-1}, \zeta_{k}{\zeta_{1}}^{-1}, \ \mbox{for}~{i=2, \dots, 2n; \ j=2, \dots, 2n; \ k=2, \dots, 2n}.$$

We prove that these $6n-3$ elements all have infinite order and pairwise commute. Consequently, they generate a subgroup isomorphic to $\mathbb{Z}^{6n-3}$, which completes the proof of the theorem.

\end{proof}

\subsection{The Galois covers of the surfaces of $(m,n)$-type}\label{sec-mn}

\begin{thm}\label{mn theorem}
Group $\pi_1(X^{m,n}_{gal})$ contains a subgroup that is isomorphic to ~$\mathbb{Z}^{m(2n-1)}
$.
\end{thm}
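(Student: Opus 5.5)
The plan is to follow the template of Theorem~\ref{24 theorem}, Theorem~\ref{2n theorem} and Theorem~\ref{3n theorem}, and to organize the argument by the $m$ horizontal ``rows'' of the degeneration $X_0^{m,n}$ in Figure~\ref{Figmn}.

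\textbf{Step 1: presentation of $G^{m,n}_1$.} First I would assemble the full presentation of $G^{m,n}_1$. Lemma~\ref{2pt-lemma} handles the two-line vertices $V_1,\dots,V_n,V_{x+1},\dots,V_{x+n}$. Lemma~\ref{6pt-1-result} and Lemma~\ref{6pt-2-result} handle the six-line vertices of the two types. The commutation relations of type~(\ref{par}) handle disjoint edges. Deriving the fork relations~(\ref{forkrel}) exactly as was done for $(2,4)$ yields $C_Y(T)$. By Lemma~\ref{ii} and~\eqref{444}, the remaining relations are precisely the $E_6$ basic cycle relations, which establishes the isomorphism~(\ref{formula}) for general $(m,n)$.

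\textbf{Step 2: the unflattened cycles.} Next I would identify, in the dual graph $T$, the $m$ cycles $H^{m,n}_1,\dots,H^{m,n}_m$ of length $2n$ that wind around the torus direction, one per row. These are generated by the horizontal edges $1,\dots,2n$, then $3n+1,\dots,5n$, and so on, with the pattern read off from the labels $x,y,z$. None of them is an $E_6$ basic cycle. For each $H^{m,n}_k$ I would define $\gamma^{(k)}_1,\dots,\gamma^{(k)}_{2n}$ as in Section~\ref{method}. By Proposition~4.1 of \cite{RTV}, the $2n-1$ elements $\gamma^{(k)}_i(\gamma^{(k)}_1)^{-1}$, for $i=2,\dots,2n$, lie in $K_1(T)$. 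Since $\theta\colon \pi_1(X^{m,n}_{gal})\to K_1(T)$ is surjective, I would lift them to $\pi_1(X^{m,n}_{gal})$. This gives $m(2n-1)$ elements in total.

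\textbf{Step 3: pairwise commutation.} Within one cycle, the commutator identity reduces, as in the $(2,4)$ computation, to $\gamma^{(k)}_i(\gamma^{(k)}_1)^{-1}\gamma^{(k)}_j=\gamma^{(k)}_j(\gamma^{(k)}_1)^{-1}\gamma^{(k)}_i$. This is verified using only $\langle u,u+1\rangle=e$ around the cycle and $[u,v]=e$ for non-adjacent edges. For two different cycles $H^{m,n}_k$ and $H^{m,n}_l$:
\begin{itemize}
\item If $|k-l|\ge 2$, the edges are pairwise disjoint in $T$, so they commute directly by type~(\ref{par}).
\item If the rows are adjacent, they share the six-line vertices. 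There I would use the commutations~\eqref{eq.2} and~\eqref{eq.5} together with the cycle identities~\eqref{44} and~\eqref{444}. These let me slide each generator of one cycle past the product of the other cycle, modulo elements that cancel in the ratio $\gamma_i\gamma_1^{-1}$.
\end{itemize}

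\textbf{Step 4: infinite order and independence.} Finally, I need the $m(2n-1)$ commuting elements to generate a free abelian group of full rank. For this I would map $G^{m,n}_1$ onto, or compare it with, $A_{t,2mn}$ as in the $(2,4)$ case. A nontrivial relation among the elements would then give a relation in $A_{t,2mn}$ beyond its defining commutation relations, which is impossible. An alternative I would try is to abelianize the kernel and show that the images of the $m$ cycles are independent there.

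\textbf{Expected main obstacle.} I expect the hardest parts to be the commutation of elements coming from adjacent rows, which meet at the $E_6$ vertices, and making the independence argument rigorous rather than heuristic. For the commutation I would first try to show that $\gamma^{(k)}_i(\gamma^{(k)}_1)^{-1}$ is conjugate to a product supported on the edges of row $k$ alone, after applying the $E_6$ relations.
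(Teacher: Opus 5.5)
\textbf{Verdict: the proposal has a genuine gap in Step 4 and a second gap in the lifting to $\pi_1(X^{m,n}_{gal})$; Steps 1--3 are fine and match the paper.}

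Your Steps 1--3 follow the paper's proof essentially step by step. Your Step 4 is the paper's torsion argument. You flagged that step as heuristic yourself, and neither of your two versions closes it.

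\textbf{Why Step 4 fails.} The comparison with $A_{t,2mn}$ runs in the wrong direction. $K(T)\cong A_{t,2mn}$ surjects onto $K_1(T)=K(T)/(B\cap K(T))$, and $G^{m,n}_1$ does not map onto $A_{t,2mn}$. A torsion or dependence relation among your elements in $K_1(T)$ only says that some word lies in $B\cap K(T)$. That is not a relation in $A_{t,2mn}$, so nothing is contradicted unless you already know $B\cap K(T)$, which is the whole problem. In any case, $A_{t,n}$ has the non-commutation defining relations $x_{ij}x_{jk}=x_{ik}$.

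Your alternative, showing that ``the $m$ cycles'' are independent in an abelianization, fails for a concrete reason:
\begin{itemize}
\item $T$ has $2mn$ vertices and $3mn-n$ edges, so its cycle rank is $n(m-1)+1$.
\item The $n(m-1)$ hexagonal $E_6$ cycles are independent.
\item Modulo them, only the core circle of the annulus survives, and every $H^{m,n}_k$ represents that single class. Indeed, up to orientation, $H^{m,n}_k-H^{m,n}_{k+1}$ is the sum of the $n$ hexagons between the two rows.
\end{itemize}
So independence across rows cannot come from distinct cycle classes. It has to come from the fact that distinct rows pass through disjoint sets of triangles.

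\textbf{A way to repair Step 4.} Use an explicit affine model.
\begin{itemize}
\item Choose integers $\kappa(u)$ on the oriented edges of $T$ with holonomy $0$ around every hexagon and $\pm1$ around the core circle. For example, put $\kappa=\pm1$ on the $m$ edges crossing one fixed vertical line of the grid and $0$ elsewhere.
\item Send the edge $u$ joining triangles $\alpha,\beta$ to the affine reflection $v\mapsto v-(v_\alpha-v_\beta-\kappa(u))(e_\alpha-e_\beta)$ of $\mathbb{R}^{2mn}$.
\item The relations of $C_Y(T)$ hold, because the linear roots involved are orthogonal or at angle $2\pi/3$.
\item A direct computation shows that $\gamma_{r-1}\gamma_r^{-1}$ maps to the translation by $(\text{holonomy})\cdot(e_1-e_r)$, up to sign. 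Hence all $E_6$ relations hold and, by (\ref{formula}), the map factors through $G^{m,n}_1$.
\item Under this map, $\gamma^{(k)}_j(\gamma^{(k)}_1)^{-1}$ becomes a translation by $\pm(e_\alpha-e_\beta)$. Here $\alpha$ is a fixed triangle of row $k$ and $\beta$ runs over the other $2n-1$ triangles of that row. So the $m(2n-1)$ images are linearly independent.
\end{itemize}

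\textbf{The lifting gap.} Independence passes to any lifts, but commutation does not: a group surjecting onto $\mathbb{Z}^k$ need not contain $\mathbb{Z}^k$ (think of $F_2\to\mathbb{Z}^2$). Your Step 3 uses only relations of $G^{m,n}_1$, which is where Lemmas~\ref{6pt-1-result} and~\ref{6pt-2-result} are stated. You must still either verify that the commutators of the chosen words vanish in $G^{m,n}$ itself, or otherwise control $\ker\theta$. Only then can you conclude that $\pi_1(X^{m,n}_{gal})$ contains $\mathbb{Z}^{m(2n-1)}$.
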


\begin{proof}
Let $C^{m,n}$ be the branch curve of the surface $X^{m,n}$. The fundamental group $\pi_1(\mathbb{CP}^2 - C^{m,n})$ has a presentation with generators ${\{j,j'\}}^{3mn-n}_{j=1}$; we denote this group by $G^{m,n}$.

Figure~\ref{CPmncombine} shows that the configuration has four types of vertices: upper edge vertices, lower edge vertices, inner vertices, and left edge vertices. The relations arising from these vertices, computed using Lemmas~\ref{2pt-lemma}, \ref{6pt-1-result}, and \ref{6pt-2-result}, yield all relations of types (\ref{vK2}) and (\ref{vK3}). Additionally, we obtain extra commutation relations of type (\ref{par}) for lines intersecting in $C^{m,n}$.

The remaining task is to derive the full set of relations of type (\ref{forkrel}). Based on the computations above, we now provide a detailed proof of this result.

Relations of type (\ref{forkrel}) can be readily identified on both the left and right sides of Figure~\ref{CPmncombine}. On the left side, the method for deriving (\ref{forkrel}) is similar for all cases; thus, it suffices to verify a single representative case as follows. Firstly, we give Figure~\ref{fork1} with a local piece from the left side of Figure~\ref{Figmn}. 
\begin{figure}[H]
\begin{center}
\scalebox{0.7}{\includegraphics{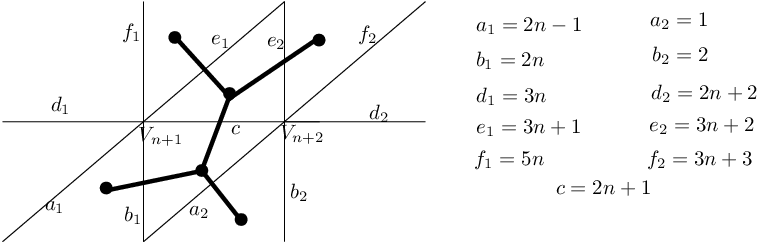}}
\end{center}
\setlength{\abovecaptionskip}{-0.15cm}
\caption{Local piece from Figure~\ref{Figmn} }\label{fork1}
\end{figure}
Secondly, let us summarize the relations that are related to vertex $V_{n+1}$, and that come from Lemma~\ref{6pt-2-result}:
\begin{eqnarray}\label{mn.1}
& \langle a_1,b_1\rangle=\langle b_1,c\rangle=\langle c,e_1\rangle=\langle e_1, f_1\rangle=\langle f_1, d_1\rangle=\langle d_1, a_1\rangle=e,\\
&[a_1, c]=[a_1, e_1]=[a_1, f_1]=[b_1,d_1]=[b_1,e_1]=[b_1,f_1]=[c,f_1]=[c,d_1]=[d_1,e_1]=e,\\
& c\ a_1\ b_1\ a_1 \ c=d_1\ f_1 \ e_1 \ f_1\ d_1.
\end{eqnarray}
And we summarize the relations that are related to vertex $V_{n+2}$ that come from Lemma~\ref{6pt-1-result}:
\begin{eqnarray}\label{mn.2}
& \langle a_2,b_2\rangle=\langle b_2,d_2\rangle=\langle d_2, f_2\rangle=\langle f_2, e_2\rangle=\langle e_2,c \rangle=\langle c,a_2 \rangle=e,\\
&[a_2,d_2]=[a_2,e_2]=[a_2,f_2]=[b_2,c]=[b_2,e_2]=[b_2,f_2]=[d_2,e_2]=[d_2,c]=[c,f_2]=e,\\
& c\ a_2\ b_2\ a_2\ c = d_2\ f_2\ e_2\ f_2\ d_2.
\end{eqnarray}
Next, we do the simplifications
\begin{eqnarray*}\label{mn.3}
&e=[a_2,d_1]=[a_2,f_1 \ e_1 \ f_1\ c\ a_1\ b_1\ a_1\ c\ f_1 \ e_1 \ f_1]\\
&\xlongequal{[a_2,f_1]=[a_2,e_1]=e}[a_2, c\ a_1\ b_1\ a_1\ c]
\xlongequal{[a_1,c]=[a_2,a_1]=e}[a_2,b_1\ c\ b_1],
\end{eqnarray*}
and 
\begin{eqnarray*}\label{mn.4}
&e=[e_1,d_2]=[e_1,f_2 \ e_2 \ f_2\ c\ a_2\ b_2\ a_2\ c\ f_2 \ e_2 \ f_2]
\xlongequal{[e_1,f_2]=e, \langle c, a_2\ b_2\ a_2 \rangle=e}\\
&[e_1, e_2 \ f_2\ a_2\ b_2\ a_2\ c \ a_2\ b_2\ a_2\ f_2 \ e_2]
\xlongequal{[a_2,f_2]=[a_2,e_2]=[b_2,f_2]=[b_2,e_2]=e}\\
&[e_1, e_2 \ f_2\ c \ f_2 \ e_2]
\xlongequal{[c,f_2]=e}[e_1,c\ e_2\ c].
\end{eqnarray*}
And we get the required relations of type (\ref{forkrel}).  

Another local piece from Figure \ref{Figmn} is given in Figure \ref{fork2}, and by similar computations, we get $[a_2, b_1~d~b_1] = e$ and $[f_1, d~f_2~d] = e$, which are of type (\ref{forkrel}).  
\begin{figure}[H]
\begin{center}
\scalebox{0.7}{\includegraphics{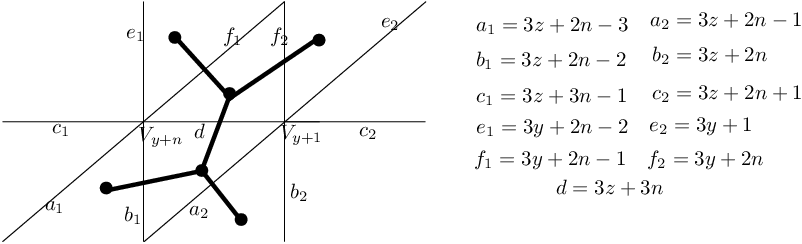}}
\end{center}
\setlength{\abovecaptionskip}{-0.15cm}
\caption{Another local piece from Figure~\ref{Figmn}}\label{fork2}
\end{figure}

As a result of the preceding calculations, the presentation of $G^{m,n}_1$ includes the additional $E_6$ basic cycle relations. Thus, $G^{m,n}_1\cong C_Y(T)/<$ all $E_6$ basic cycle relations$>$.

From the relations computed for group ${G^{m,n}_1}$, it is evident that it contains all the $E_6$ basic cycles, but none of the cycles of $m$ types, as follows (see Figures \ref{Figmn} and \ref{CPmncombine}):
\begin{itemize}\label{88}
    \item The cycle $(H^{m,n}_1)$ generated by generators $1, 2,..., 2n$;
    \item The cycle $(H^{m,n}_2)$ generated by generators $3n+1, 3n+2,..., 5n$;
    \item $\cdots\cdots\cdots$
    \item The cycle $(H^{m,n}_{m-1})$ generated by generators $3z+1, 3z+2,..., 3z+2n$;
    \item The cycle $(H^{m,n}_m)$ generated by generators $3y+1, 3y+2,..., 3y+2n$.
\end{itemize}

Also, here the following exact sequence holds:\\
$$
\begin{tikzcd}
0 \arrow[r] 
  & \pi_1(X^{m,n}_{gal}) \arrow[r] \arrow[d, ""] 
  & G^{m,n} \arrow[r] \arrow[d] 
  & S_{2mn} \arrow[r] \arrow[d, equal] 
  & 0 \\
0 \arrow[r] 
  & K_1(T) \arrow[r] 
  & G^{m,n}_1 \arrow[r] 
  & S_{2mn} \arrow[r] 
  & 0
\end{tikzcd}
$$

Proposition 4.1 in \cite{RTV} states that the kernel $K_1(T)$ in the second exact sequence is generated as a subgroup by the elements $\gamma_j \gamma^{-1}_i$. For instance, let us consider the cycle $H^{m,n}_1$ generated by generators $1, 2, \ldots, 2n$ in the dual graph. Define $$\gamma_i = (i+2)  \cdots  2n \cdot 1  \cdots  i,  \ \ for~ i=1, \ldots, 2n-2,$$ 
and set
$$\gamma_{2n-1}= 1 \cdot 2\cdots(2n-1),  \ \gamma_{2n}=2\cdot3\cdots(2n).$$
Then  $\gamma_{j}\gamma^{-1}_{1}\in K_1(T)$ for $~j=2,\ldots,2n$.
Hence, we obtain $2n-1$ distinct elements from cycle $H^{m,n}_1$ in $K_1(T)$. And that happens for each of cycles $H^{m,n}_i$, \ $i=1, \ldots, m$.

Via the lift map from $K_1(T)$ to $\pi_1(X^{m,n}_{gal})$, it follows directly that 
$\pi_1(X^{m,n}_{gal})$ contains the elements:
$$\gamma_{j}\gamma^{-1}_{1},~j=2,\ldots,2n.$$
   
Since cycle $H^{m,n}_1$ yields $(2n-1)$ elements ($\gamma_j\gamma^{-1}_1$ for $j = 2, \ldots, 2n$), we apply a similar method to show that, via the lift map, $\pi_1(X^{m,n}_{gal})$ contains $(2n-1)$ such elements for each cycle $H^{m,n}_i$ with $i = 1, 2, \ldots, m$.
In total, this gives $m(2n-1)$ elements.

\bigskip

Now, we prove the above elements are of infinite order and commute with each other.

The proof is divided into two steps. In the first step, we prove that the elements are torsion-free. As established previously, the group $C_Y(T)$ does not contain any of the $E_6$ basic cycle relations that are present in $G^{m,n}_1$.
According to Theorem~6.1 in \cite{RTV}, we have the following short exact sequence:
$$
0 
\rightarrow K(T)
\rightarrow C_Y(T)
\rightarrow S_{2mn}
\rightarrow 0,
$$
where $K(T)$ is isomorphic to $A_{t,n}$, with $t$ denoting the number of cycles in  dual graph $T$. The generators of $K(T)$ are elements of the form $\gamma_j\gamma^{-1}_i$, within each cycle.

Now consider group $G^{m,n}_1$. According to Lemma~\ref{ii}, for any $E_6$ basic cycle, if one relation of the form $\gamma_{j_0}\gamma^{-1}_{i_0}$ is imposed, then all other relations of the form $\gamma_j\gamma^{-1}_i$ within the same cycle are automatically satisfied. Consequently, $G^{m,n}_1$ is isomorphic to the quotient of $C_Y(T)$ by the normal subgroup generated by the elements $\gamma_j\gamma^{-1}_i$ from every $E_6$ basic cycle. Denoting this normal subgroup by $B$, we obtain the following short exact sequence:
$$
0 
\rightarrow K(T)/(B \cap K(T))
\rightarrow G^{m,n}_1
\rightarrow S_{2mn}
\rightarrow 0.
$$

The kernel $K(T)/(B \cap K(T))$ is generated by all elements $\gamma_j\gamma^{-1}_i$ corresponding to the cycle $(H^{m,n}_i), \ i=1, 2, \ldots, m-1, m$ (the elements arising from the $E_6$ basic cycle relations  have been quotiented out).

Suppose that there exists a torsion element in one of these generators. 
Consider the preimage of the above torsion element
in $K(T)$.
We denote it by $h$. This would imply that a power of this generator can be written as a product of other generators corresponding to the preimage of $\gamma_j\gamma^{-1}_i \ (i=1, 2, \ldots, m-1, m) $ from the $E_6$ basic cycle.
Using \cite[Theorem~6.1]{RTV}, we can write such relations as follows:
$$ (h)^n=\sum_{i,j} {a_{ij}}\gamma_j\gamma^{-1}_i.~~~~~(*)$$

However, since $K(T) \cong A_{t,n}$, and by the definition of $A_{t,n}$, its generators satisfy only commutation relations of the form $[x_{ij}, y_{kl}] = 1$ and no others, it follows that $A_{t,n}$ admits no relations of form~$(*)$. This leads to a contradiction. Therefore, the elements $\gamma_{i}\gamma_{1}^{-1} \in K(T)$ for $i = 2, \ldots, 2n$ (studied in Section~\ref{sec-mn}) are torsion-free.

In the second step, we prove that these elements commute pairwise. The commutativity of elements from different cycles $(H^{m,n}_i)$ can be verified directly by computation. The key part is to prove that within the same cycle $(H^{m,n}_i)$, elements of form $\gamma_i\gamma^{-1}_1$ and $\gamma_j\gamma^{-1}_1$ commute. To show that $\gamma_i\gamma^{-1}_1\gamma_j\gamma^{-1}_1 = \gamma_j\gamma^{-1}_1\gamma_i\gamma^{-1}_1$, it suffices to prove the equivalent relation:
$
\gamma_i\gamma^{-1}_1\gamma_j = \gamma_j\gamma^{-1}_1\gamma_i$,
where $\gamma_i=(i+2)\cdot (i+3)\cdots(2n-1)\cdot(2n)\cdot 1\cdot2 \cdots i$, here $i=1,2,\dots,2n-2 $, and $\gamma_{2n-1}=1\cdot 2\cdots(2n-2)\cdot(2n-1)$, $\gamma_{2n}=2\cdot 3\cdots(2n-1)\cdot(2n)$. 

Here we only prove that the case 
\begin{equation}\label{ff}
   \gamma_{2n-1}\gamma^{-1}_1\gamma_{2n} = \gamma_{2n}\gamma^{-1}_1\gamma_{2n-1} 
\end{equation}
holds for the cycle $(H^{m,n}_1)$ (the other cases have similar proofs). Since $i=i^{-1}$ for any generator $i$ in the group $G^{m,n}_1$, we have $\gamma^{-1}_1=1^{-1} \cdot (2n)^{-1}\cdot (2n-1)^{-1} \cdots 4^{-1}\cdot 3^{-1}=1 \cdot (2n)\cdot (2n-1) \cdots 4\cdot 3$. 

Now we expand both sides of relation (\ref{ff}) by substituting the above expressions. LHS (resp. RHS) is the left (resp. right) side of (\ref{ff}). We get the following equality: 
\begin{equation}\label{gg1}
\begin{aligned}
\text{LHS} &= 1\cdot 2\cdots(2n-2)\cdot(2n-1) \cdot 1 \cdot (2n)\cdot (2n-1) \cdots 4\cdot 3 \cdot 2\cdot 3\cdots(2n-1)\cdot(2n) \\
&= 2\cdot 3\cdots(2n-1)\cdot(2n) \cdot 1 \cdot (2n)\cdot (2n-1) \cdots 4\cdot 3 \cdot 1\cdot 2\cdots(2n-2)\cdot(2n-1)=\text{RHS}.
\end{aligned}
\end{equation}
We simplify (\ref{gg1}) by using relations $[1,i]=e,~i=3,~4,~\dots,~(2n-1)$ and $\langle 1,~2\rangle=e=\langle 1,~2n\rangle$. We get the following equality:
\begin{equation}\label{gg2}
\begin{aligned}
\text{LHS} &= 2\cdot 3\cdots(2n-2)\cdot(2n-1) \cdot (2n)\cdot (2n-1) \cdots 4\cdot 3 \cdot 2\cdot 3\cdots(2n-1)\cdot(2n)\\
&= 3\cdot 4\cdots(2n-2)\cdot(2n-1) \cdot (2n)\cdot (2n-1) \cdots 4\cdot 3 \cdot 2\cdot3\cdots(2n-2)\cdot(2n-1)=\text{RHS}.
\end{aligned}
\end{equation}
Then we use relations  $[2n,i]=e,~i=2,~3,~\dots,~(2n-2)$ and $\langle 2n-1,~2n\rangle=e$, and get
\begin{equation}\label{gg3}
\begin{aligned}
\text{LHS} &= 2\cdot 3\cdots(2n-2)\cdot(2n-1) \cdot (2n-2)\cdot (2n)\cdot(2n-3) \cdots 4\cdot 3 \cdot 2\cdot 3\cdots(2n-1)\cdot(2n)\\
&= 3\cdot 4\cdots(2n-2)\cdot(2n-1) \cdot (2n-2)\cdot(2n) \cdot (2n-3) \cdots 4\cdot 3 \cdot 2\cdot3\cdots(2n-2)\cdot(2n-1)\\&=\text{RHS}.
\end{aligned}
\end{equation}
Now we use relations  $[2n-1,i]=e,~i=2,~3,~\dots,~(2n-3)$ and $\langle 2n-2,~2n-1\rangle=e$ to get
\begin{equation}\label{gg4}
\begin{aligned}
\text{LHS} &= 2\cdot 3\cdots(2n-3)\cdot(2n-2)\cdot(2n-3) \cdot (2n-1)\cdot (2n)\cdot(2n-4) \cdots 4\cdot 3 \cdot 2\cdot 3\cdots(2n-1)\\&\cdot(2n) 
= 3\cdot 4\cdots(2n-3)\cdot(2n-2)\cdot(2n-3) \cdot (2n-1)\cdot(2n) \cdot (2n-4) \cdots 4\cdot 3 \cdot 2\cdot3\\&\cdots(2n-2)\cdot(2n-1)=\text{RHS}.
\end{aligned}
\end{equation}
The simplification proceeds by using $[2n-2,i] = e$, $i = 2, 3, \dots, (2n-4)$, and $\langle 2n-3, ~2n-2\rangle = e$. Ultimately, we obtain the relation: $$\langle 2n, 2n-1\rangle = e.$$
It is clear that this relation already exists, hence the equivalent relation (\ref{ff}) holds. A similar argument applies to any other pair of commuting elements.

Therefore, the $m(2n-1)$ elements generate a subgroup isomorphic to $\mathbb{Z}^{m(2n-1)}$, which completes the proof of the theorem.
\end{proof}

\subsection{Open questions}

Computing the fundamental group $\pi_1(X^{m,n}_{gal})$ for a general surface $X^{m,n}$ is considerably more complex than for the special cases previously studied. We conclude by posing the following interesting open questions:

 \begin{Qst}Is $\pi_1(X^{m,n}_{gal})$ an abelian group?
\end{Qst}

\begin{Qst}Can one explicitly express $\pi_1(X^{m,n}_{gal})$ as a direct or semidirect product of simpler groups?
\end{Qst}

Answers to these questions will significantly deepen our understanding of the moduli space of algebraic surfaces.

\section{Estimation of the irregularity of the Galois covers}\label{bgzd}
In this section, we estimate the irregularity of the Galois covers using the method of Liedtke~\cite{Li}.

Let $f: X \longrightarrow \mathbb{CP}^2$
be a generic projection of degree $l$, with branch curve
$S \subset \mathbb{CP}^2.$
Consider an affine chart $\mathbb{CA}^2$ of $\mathbb{CP}^2$, and let $
D := S \cap \mathbb{CA}^2$
be the affine portion of the branch curve.

From Lemma 3.2 of \cite{L}
there is a short exact sequence
$$
1 \to \pi_1(X^{\mathrm{aff}}_{\mathrm{gal}}) \to \pi_1(\mathbb{CA}^2 - D, x_0)/\langle \Gamma_i^2 \rangle\ \xrightarrow{\psi} S_l \to 1.
$$
Here ${\Gamma_i}$ are the generators of $\pi_1(\mathbb{CA}^2-D, x_0)$, and 
 $X^{\mathrm{aff}}_{\mathrm{gal}}$ denotes an affine chart of the Galois cover $X_{\mathrm{gal}}$ for a general projection $f$.

We define now $\overline {C_{proj}}$.
\begin{Def}
Let $C_{\text{aff}}$ be the subgroup normally generated by the following elements inside $\pi_1(\mathbb{CA}^2-D, x_0)$:
\begin{itemize}
\item $[\gamma\Gamma_i\gamma, \Gamma^{-1}_j]$~~ if $\psi(\gamma\Gamma_j\gamma^{-1})$ and $\psi(\Gamma_i)$ are disjoint transpositions,
\item $<\gamma\Gamma_i\gamma, {\Gamma^{-1}_{j}}> $ if $\psi(\gamma\Gamma_i\gamma^{-1})$ and $\psi(\Gamma_j)$ have precisely one letter in common,
\end{itemize}
where $\gamma$ runs through $\pi_1(\mathbb{CA}^2-D, x_0)$.

We define the image of $C_{\text{aff}}$ in $\pi_1(X_{gal})$ by $C_{proj}$. Let $\overline {C_{proj}}$ be the image of $C_{proj}$ in the abelianization of $\pi_1(X_{gal})$.
\end{Def}

Our main tool is the following result (see Corollary 4.9 of \cite{L}):

\begin{cor}\label{L} 
The rank of $H_1(X_{gal}, \mathbb{Z})/\overline{C_{proj}}$ as an abelian group is equal to $(l - 1)$ times the rank of $H_1(X, \mathbb{Z})$,  where $H_1(X_{gal}, \mathbb{Z})$ and $H_1(X, \mathbb{Z})$ are the first homology groups of $X_{gal}$ and $X$, respectively, and $\overline{C_{proj}}$ is a subgroup  of $H_1(X_{gal}, \mathbb{Z})$.
\end{cor}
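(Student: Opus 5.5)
Since Corollary~\ref{L} is quoted from \cite{L}, the plan is to rebuild Liedtke's argument rather than use anything specific to $X^{m,n}$. Write
$$\Pi:=\pi_1(\mathbb{CA}^2-D,x_0)/\langle\langle \Gamma_i^2\rangle\rangle, \qquad \widetilde{\Pi}:=\Pi/\overline{C_{\text{aff}}},$$
where $\overline{C_{\text{aff}}}$ is the image of $C_{\text{aff}}$ in $\Pi$. Every generator of $C_{\text{aff}}$ maps to the identity in $S_l$: disjoint transpositions commute, and transpositions sharing one letter satisfy the triple relation. So $\psi$ factors through $\widetilde{\Pi}$. The exact sequence of Lemma~3.2 in \cite{L} then descends to
$$1\to \pi_1(X^{\mathrm{aff}}_{\mathrm{gal}})/C_{\text{aff}}\to \widetilde{\Pi}\to S_l\to 1.$$
Passing from the affine chart to the projective surface only kills the class of the loop around the line at infinity. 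Hence the kernel above differs from $\pi_1(X_{gal})/C_{proj}$ by a quotient by a cyclic normal subgroup. I expect this to change ranks by at most a bounded correction, which I will track at the end, since it is torsion after abelianizing.

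The core step is to identify $\widetilde{\Pi}$ with an explicit group built from $\pi_1(X)$. The generic projection $f$ is unramified over $\mathbb{CA}^2-D$, so the stabilizer $\psi^{-1}(\mathrm{Stab}(1))\subset\Pi$ surjects onto $\pi_1(X)$. The kernel of this surjection is generated by the squares of the $\Gamma_i$ fixing sheet $1$ and by the $\Gamma_i$ that move sheet $1$, since these are exactly the loops around the ramification divisor. Using the $l$ sheets, I will define a homomorphism
$$\Phi:\ \pi_1(X_{gal})/C_{proj}\longrightarrow \pi_1(X)^l$$
that records the image of a loop on each of the $l$ sheets. The relations in $C_{\text{aff}}$ are exactly those forcing loops that act on disjoint sheets, or overlap in one sheet, to commute or braid in the same way as in $S_l\ltimes\pi_1(X)^l$. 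So $\Phi$ should be well defined and equivariant for the permutation action of $S_l$. Its image should be the subgroup
$$\{(g_1,\dots,g_l): g_1\cdots g_l\in[\pi_1(X),\pi_1(X)]\}.$$
The product condition arises because a loop in the Galois cover projects to a null-homologous loop in the whole of $\mathbb{CA}^2-D$ after symmetrization. I will prove surjectivity onto this subgroup by lifting generators sheet by sheet with suitable conjugates of the $\Gamma_i$.

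After abelianizing, the image becomes $\ker\bigl(H_1(X)^l\to H_1(X)\bigr)$ up to finite index, and this has rank $(l-1)\,\mathrm{rank}\,H_1(X,\mathbb{Z})$. This gives the lower bound on $\mathrm{rank}\,H_1(X_{gal},\mathbb{Z})/\overline{C_{proj}}$. The main obstacle is the upper bound: I must show that $\Phi$ is injective modulo torsion after abelianization, that is, that $C_{\text{aff}}$ kills everything not detected by the sheet-wise maps. My first approach is a Reidemeister--Schreier computation for the finite-index subgroup $\pi_1(X_{gal})/C_{proj}\subset\widetilde{\Pi}$. I would use the transversal given by the coset representatives of $S_l$, then use the $C_{\text{aff}}$ relations to show that each Schreier generator is determined by its images on the individual sheets, up to elements of finite order. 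Combined with the cyclic correction from the line at infinity, this yields equality of ranks.
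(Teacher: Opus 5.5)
You should know that the paper does not prove this statement: it quotes Liedtke's Corollary 4.9, and Section~\ref{bgzd} uses only the inequality $\mathrm{rank}\,H_1(X_{gal},\mathbb{Z})/\overline{C_{proj}}\ge(l-1)\,\mathrm{rank}\,H_1(X,\mathbb{Z})$. Judged against the statement itself, your proposal has a genuine gap. The reverse inequality is the real content of the result, and you only announce it (``my first approach is a Reidemeister--Schreier computation'') without giving a mechanism. The sub-goal you set for it, that each Schreier generator is determined by its sheet images up to elements of finite order, is also the wrong target. In Liedtke's treatment, $\pi_1(X_{gal})/C_{proj}$ is identified with (a quotient of) a group $\widetilde{\mathcal K}(\pi_1(X),l)$. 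This group is a central extension of $\mathcal K(\pi_1(X),l)=\{(g_1,\dots,g_l): g_1\cdots g_l\in[\pi_1(X),\pi_1(X)]\}$ by a quotient of $H_2(\pi_1(X),\mathbb{Z})$. So $\ker\Phi$ is a central subgroup coming from $H_2$, and there is no reason for it to be torsion. What must be shown is that it lies in the commutator subgroup and therefore dies in $H_1$. That is exactly where the relations of $C_{\mathrm{aff}}$ have to be used, and your outline does not say how. Your ``cyclic correction'' has the same unsupported step: you assert that the loop around the line at infinity is torsion after abelianizing, but if it were not, the rank would drop by one. You can avoid this by working from the start with $\pi_1(\mathbb{CP}^2-S)/\langle\langle\Gamma_i^2\rangle\rangle$, whose kernel to $S_l$ is $\pi_1(X_{gal})$ itself.

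The half you do sketch has two slips. Once they are fixed, it closes without the surjectivity onto $\mathcal K$ that you planned to prove. First, you describe the kernel of $\psi^{-1}(\mathrm{Stab}(1))\to\pi_1(X)$ backwards. It is normally generated by the conjugates $\gamma\Gamma_i\gamma^{-1}$ whose transposition fixes $1$; these are meridians of the unramified preimages of $D$. The squares of those moving $1$ are meridians of the ramification curve and are already trivial in $\Pi$, and elements moving sheet $1$ are not in the stabilizer at all. Second, the product condition does not come from null-homology in $\mathbb{CA}^2-D$, whose $H_1$ is free on the components of $D$. It comes from the transfer identity $\sum_k p_{k*}=f^{!}\circ(f\circ p_1)_*$ on $H_1(X_{gal})$ together with $H_1(\mathbb{CP}^2)=0$. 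With these corrections the lower bound goes as follows. Each generator of $C_{\mathrm{aff}}$ lifts on every sheet to a loop that is null-homotopic in $X$; only the triviality in $\pi_1(X)$ of the two kinds of meridians is needed. Hence $(p_{1*},\dots,p_{l*})$ factors through $H_1(X_{gal},\mathbb{Z})/\overline{C_{proj}}$. Its image is stable under permuting coordinates, lies in $\ker(H_1(X)^l\to H_1(X))$, and has finite-index first projection since $p_{1*}\circ p_1^{!}=\deg p_1$. Rationally that kernel is $H_1(X,\mathbb{Q})\otimes\mathrm{Std}$, where $\mathrm{Std}$ is the irreducible standard representation of $S_l$. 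Every $S_l$-stable subspace therefore has the form $W\otimes\mathrm{Std}$, and surjectivity of the first projection forces $W=H_1(X,\mathbb{Q})$. This gives rank at least $(l-1)\,\mathrm{rank}\,H_1(X,\mathbb{Z})$, which is all the paper needs, but it is only half of the stated equality.
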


In the context of this paper, it can be calculated that the degree of the general cover is $2mn$. Since $X^{m,n}$ is an elliptic surface, the rank of $H_1(X^{m,n}, \mathbb{Z})$ is $2$. Therefore, by Corollary \ref{L}, the rank of $H_1(X^{m,n}_{gal}, \mathbb{Z}) / \overline{C_{proj}}$ is $(2mn - 1) \times 2 = 4mn - 2$. Consequently, the rank of $H_1(X^{m,n}_{gal}, \mathbb{Z})$ itself is at least $4mn - 2$. According to the Hodge decomposition (see \cite{GH}), we have $\mathrm{rank}~(H_1(X^{m,n}_{gal}, \mathbb{Z})) = 2q(X^{m,n}_{gal})$, where $q(X^{m,n}_{gal})$ is the regularity of the Galois cover, and this implies that the irregularity satisfies $q(X^{m,n}_{gal}) \geq 2mn - 1$.

This lower bound is in fact sharp. To see this, consider the surfaces $X^{1,n}$ studied in \cite{AGTV02}. These surfaces are constructed as Galois covers of generic projections from the product surface $E \times \mathbb{P}^1$, where $E$ is an elliptic curve.

Let $F$ be a very ample line bundle of degree $n$ on $E$. The line bundle $L := F \boxtimes O_{\mathbb{P}^1}(d)$ on $X^{1,n}$ is very ample. If we choose $n$ sufficiently large and $d=1$, then $L$ gives rise to a generic projection $f : X^{1,n} \rightarrow \mathbb{P}^2$. The degree $n = \mathrm{deg} f$ equals the self-intersection of $L$, which is equal to $2n$. For this surface, it is shown that $\pi_1(X^{1,n}_{gal}) \cong \mathbb{Z}^{4n - 2}$. Since the first homology group is the abelianization of the fundamental group, it follows that $H_1(X^{1,n}_{gal}, \mathbb{Z}) \cong \mathbb{Z}^{4n - 2}$, which indicates that $q(X^{1,n}_{gal}) = \frac{4n - 2}{2} = 2n - 1$.

\section{Chern number and index}\label{Chern}

We study the surface $X^{m,n}$, which has a degeneration shown in Figure~\ref{Figmn}. In this section, we compute the Chern numbers of the Galois cover of this surface using the following theorem:

\begin{thm}\cite[Proposition\, 0.2]{MoTe87} \label{chern}
Let $S$ be the branch curve of an algebraic surface $X$. Denote the
degree of the generic projection by $b$, and $f: X \rightarrow \mathbb{CP}^2$ is the generic projection.

The Chern numbers of the Galois covers of the surfaces are given in the following formulas:
\begin{enumerate}
\item  $c_1^2(X_{gal})=\frac{b!}{4} (h-6)^2,$
\item $c_2(X_{gal})=b! (\frac{h^2}{2}-\frac{3h}{2}+3-\frac{3d}{4}-\frac{4\rho}{3}),$

where $h=\deg S$, $d=$ number of nodes in $S$,
$\rho=$ number of cusps in $S$.
\end{enumerate}
\end{thm}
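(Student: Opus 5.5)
The formulas are the Moishezon--Teicher computation, which we would reprove directly from the local structure of the Galois cover over $\mathbb{CP}^2$.

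\textbf{Setup.} Let $\pi\colon X_{gal}\to\mathbb{CP}^2$ be the composite of the Galois cover with $f$. Recall that $X_{gal}$ is the closure of the $b$-fold fibred product of $f$ with the big diagonal removed. Then $\pi$ is a Galois covering of degree $b!$ with group $S_b$, branched exactly along $S$. Since $f$ is generic, $S$ has only nodes and ordinary cusps, and the local monodromy is fixed by the point type:
\begin{enumerate}
\item at a smooth point of $S$ it is a single transposition;
\item at a node it is a pair of disjoint transpositions;
\item at a cusp it is a pair of transpositions sharing one letter.
\end{enumerate}
Hence the stabilizer in $S_b$ of a point of $X_{gal}$ over these three kinds of points is $\mathbb{Z}/2$, $\mathbb{Z}/2\times\mathbb{Z}/2$ and $S_3$ respectively. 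Over $\mathbb{CP}^2-S$ the stabilizer is trivial.

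\textbf{Smoothness and ramification.} The first step is to check that $X_{gal}$ is smooth and that $\pi$ is simply ramified along $R=\pi^{-1}(S)_{\rm red}$.
\begin{itemize}
\item Over a smooth point of $S$ the local model is $(x,y)\mapsto(x^2,y)$.
\item Over a node it is $(x,y)\mapsto(x^2,y^2)$, the quotient by $\mathbb{Z}/2\times\mathbb{Z}/2$.
\item Over a cusp it is the quotient $\mathbb{C}^2\to\mathbb{C}^2/S_3$ by the reflection representation of $S_3$. By Chevalley's theorem the quotient is smooth, and its discriminant is exactly the cuspidal cubic.
\end{itemize}
This local analysis at the cusps is the step I expect to be the main obstacle. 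One must identify the local branched cover coming from the generic projection with the $S_3$ reflection quotient. We would argue this via the standard local form of $f$ near a point where two sheets fold into a cusp, together with the fact that the fibred-product construction restricted to the three relevant sheets is exactly the regular $S_3$-cover.

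\textbf{$c_1^2$.} Granting this, we have $\pi^*S=2R$, since each component is simply ramified. Hurwitz's formula gives
$$K_{X_{gal}}=\pi^*K_{\mathbb{CP}^2}+R=\pi^*\Bigl(\bigl(\tfrac h2-3\bigr)H\Bigr),$$
with no codimension-two correction because canonical classes are determined off codimension two. Therefore
$$c_1^2=\deg\pi\cdot\bigl(\tfrac h2-3\bigr)^2=\frac{b!}{4}(h-6)^2.$$

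\textbf{$c_2$.} We compute $c_2=e(X_{gal})$ by stratifying $\mathbb{CP}^2$ and using the fibre cardinalities $b!$, $b!/2$, $b!/4$, $b!/6$ over the four strata.
\begin{itemize}
\item For a plane curve of degree $h$ with $d$ nodes and $\rho$ cusps, adding Milnor numbers to the smooth case gives
$$e(S)=3h-h^2+d+2\rho .$$
\item Hence $e(\mathbb{CP}^2-S)=3-3h+h^2-d-2\rho$.
\item Removing the $d+\rho$ singular points gives $e(S-\mathrm{Sing}\,S)=3h-h^2+\rho$.
\end{itemize}
Thus
$$c_2=b!\Bigl[(3-3h+h^2-d-2\rho)+\tfrac12(3h-h^2+\rho)+\tfrac d4+\tfrac\rho6\Bigr]=b!\Bigl(\tfrac{h^2}{2}-\tfrac{3h}{2}+3-\tfrac{3d}{4}-\tfrac{4\rho}{3}\Bigr),$$
which is the stated formula.
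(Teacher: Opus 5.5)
The paper gives no proof of this statement. It is quoted from Moishezon--Teicher and used as a black box in the Chern-number section, so there is no in-paper argument to compare with. Your reconstruction is the standard derivation and it is correct:
\begin{itemize}
\item Hurwitz with $\pi^*S=2R$ gives $K_{X_{gal}}=\pi^*\bigl((\tfrac h2-3)H\bigr)$, hence $c_1^2$.
\item The stratified Euler characteristic, with fibre sizes $b!,\,b!/2,\,b!/4,\,b!/6$ and $e(S)=3h-h^2+d+2\rho$, reproduces exactly $b!\bigl(\tfrac{h^2}{2}-\tfrac{3h}{2}+3-\tfrac{3d}{4}-\tfrac{4\rho}{3}\bigr)$.
\end{itemize}
As a sanity check, the generic projection of the Veronese surface has $b=4$, $h=6$, $d=0$, $\rho=9$. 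Your formulas then give $c_1^2=c_2=0$, consistent with $K_{X_{gal}}\equiv 0$.

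The step you flag as the main obstacle is not a real gap. A direct computation on the fibred product settles it, and also shows that the closure is already smooth, so no normalization is needed. At a point of $X$ over a cusp of $S$, three sheets come together (not two), and a generic projection has the local form $(x,y)\mapsto(x,\,y^3+xy)$.

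Suppose $(x,y_1),(x,y_2),(x,y_3)$ are pairwise distinct points of one fibre. Dividing $y_i^3+xy_i=y_j^3+xy_j$ by $y_i-y_j$ gives $y_i^2+y_iy_j+y_j^2+x=0$. Subtracting two of these gives $(y_2-y_3)(y_1+y_2+y_3)=0$, so $y_1+y_2+y_3=0$.

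Conversely, every point of the plane $P=\{y_1+y_2+y_3=0\}$, with $x=e_2(y)$, lies in the fibred product, since each $y_i$ is a root of $Y^3+e_2Y-e_3$. Hence the closure of the three-sheet part is $P\cong\mathbb{C}^2$, mapping to the base by $y\mapsto(e_2(y),e_3(y))$. This is precisely the quotient by the reflection representation of $S_3$, with discriminant $4x^3+27c^2=0$. The remaining $b-3$ coordinates of a point of $X_{gal}$ there are local sections.

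The same direct check works at the other two point types:
\begin{itemize}
\item At a fold, the closure is parametrized by $(s,t)\mapsto(s^2,t)$.
\item At a node, it is parametrized by $(s,t')\mapsto(s^2,t'^2)$.
\end{itemize}
This justifies your other two local models and the fibre cardinalities used in the Euler characteristic count.
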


Figure~\ref{Figmn}, above, shows the degeneration of the surfaces $X^{m,n}$. From this configuration, we observe that all relations from nodes and cusps originate from vertices of two types~(intersections of either two or six lines), and relations of type \ref{par}.

The number of vertices where two lines intersect is $2n$. Each of the vertices $V_2, \dots, V_n$ and $V_{n+1}, V_{n+2}, \dots, V_{2n}$ contributes three cusps. Therefore, the total number of cusps arising from these vertices is $6n$.

The number of vertices where six lines intersect is $n(m-1)$. According to \cite{ATXY}, each such vertex contributes 24 nodes and 24 cusps. Therefore, the total number of nodes (respectively, cusps) arising from these vertices is $24 n (m-1)$.

Relations of type~\ref{par}, introduced in the preliminaries, also contribute commutation relations. According to Figure \ref{Figmn}, the number of such commutation relations is given by the formulas

\begin{itemize}
   \item $4\cdot(C^2_{3mn-n}-2n-15(m-1)n+(m-1))$, if $n=2$,
   \item $4\cdot(C^2_{3mn-n}-2n-15(m-1)n)$, if $n\geq3$,
\end{itemize}
where $C^r_n=\frac{n!}{r! \ (n-r)!}$.

In conclusion, we have $b=2mn, \ h=6mn-2n, \ \rho=6n+24~n\cdot(m-1)=24mn-18n$, and the following formulas:

\begin{itemize}
    \item when $n=2$, we have
    
    $d=24~n\cdot(m-1)+4\cdot(C^2_{3mn-n}-2n-15(m-1)n+(m-1))=72m^2-128m+64,$
     \item and when $n\geq3$, we have 
     
       $d=24~n\cdot(m-1)+4\cdot(C^2_{3mn-n}-2n-15(m-1)n)=18m^2 n^2-12mn^2-42mn+2n^2+30n.$
\end{itemize}

Furthermore, in the degeneration of $X^{m,n}$, we have $\deg S = 6mn - 2n$.

The computation of Chern numbers is a crucial step in studying the moduli spaces of algebraic surfaces， see \cite{Gie}.
By the formula in Theorem \ref{chern}, we get the following Chern numbers: 

\begin{itemize}
    \item when $n=2$, we have 
    
     $c_1^2(X^{m,n}_{gal})=(4m)!~(36m^2-60m+25)$
     
     $c_2(X^{m,n}_{gal})=(4m)!~(18m^2-34m+17),$
     \item and when $n\geq3$, we have
     
  $c_1^2(X^{m,n}_{gal})=(2mn)!~(9m^2 \cdot n^2-6mn^2-18mn+n^2+6n+9)$
  
   $c_2(X^{m,n}_{gal})=(2mn)!~(9/2\cdot m^2  n^2-3mn^2-19/2\cdot mn+1/2\cdot n^2+9/2\cdot n+3).$
\end{itemize}

 In the geography problem of surfaces, the index $\tau(X_{gal}) = \frac{1}{3} \left(c_1^2(X_{gal}) - 2c_2(X_{gal})\right)$ can concisely reflect the topological properties of the surface. Moreover, the index  of surfaces, in conjunction with the Seiberg–Witten theory and the Dirac operator index, constrains the existence of metrics with positive scalar curvature (see \cite{BHPV}). Based on the results regarding the above calculations of the Chern numbers, we have:

\begin{itemize}
   \item $\tau(X^{m,n}_{gal})=(4m)!/3~\cdot(8m-9)$, \ if $n=2$.
   \item $\tau(X^{m,n}_{gal})=(2mn)!/3~\cdot (mn-3n+3)$, \ if $n\geq3$.
\end{itemize}

Then we get Theorem \ref{s}, which provides an exact expression for the $\tau$-invariant of $X^{m,n}_{gal}$ for different parameters, along with a detailed analysis of the sign of its index. The result unifies both the cases $n=2$ and $n \geq 3$, and reveals the significant influence of the parameters $m$ and $n$ on the geometric properties of the surface: as the parameters vary, the index of the surface is shown to transition from negative through zero to positive values, reflecting a profound relationship between its geometric structure and topological complexity. The proof of the theorem thus provides a pivotal computational foundation and theoretical reference for further study of the invariants and classification of such algebraic varieties.

\paragraph{Acknowledgments:} We express our gratitude to Prof. Xianhua Li for discussing the construction of the subgroups. 
This research was partly supported by NSFC~(No.12331001, 12271384), Natural Science Research Project of Anhui Educational Committee (No.2025AHGXZK40337), Anhui Science and Technology University 2025 (Stable) Talent Recruitment Program~(No.2025qhxm28), DFG Grant~(No.537912683), and  the Key Discipline Construction Project of Anhui Science and Technology University~(No.XK-XJGY002).

\end{document}